\newtheorem{theorem}{Theorem}[section]
\newtheorem{lemma}[theorem]{Lemma}
\newtheorem{corollary}[theorem]{Corollary}
\newtheorem{proposition}[theorem]{Proposition}
\theoremstyle{definition}
\theoremstyle{remark}
\newtheorem{remark}[theorem]{Remark}
\numberwithin{equation}{section}
\newcommand{\N}{\mathbb{N}}
\newcommand{\R}{\mathbb{R}}
\newcommand{\dg}{{{\rm div}_\gamma}}
\newcommand{\Dg}{{\Delta_\gamma}}
\renewcommand{\phi}{\varphi}
\newcommand{\FCb}{{\mathcal F C}_b^1}
\newcommand{\FCbt}{{\mathcal F C}_b^2}
\newcommand{\FCbk}{{\mathcal F C}_b^k}
\newcommand\scal[2]{{\left\langle #1 ,#2\right\rangle}}
\def\diver{{\rm div}}
\def\divg{\diver_\gamma}
\def\<{\langle}
\def\>{\rangle}
\def\oU{{\overline U}}
\def\phi{\varphi}
\def\p{\varphi}
\def\1{{\bf 1}}
\begin{document}
\large

\title{A symmetry result for the Ornstein-Uhlenbeck operator} 

\thanks{Supported by the {\em Progetto CaRiPaRo
``Nonlinear Partial Differential Equations: models, analysis, and control-theoretic
problems"} and the {\em
ERC grant $\epsilon$ ``Elliptic Pde's and Symmetry of Interfaces and Layers
for Odd Nonlinearities"}.}

\author{Annalisa Cesaroni}
\author{Matteo Novaga} 
\address{Annalisa Cesaroni and Matteo Novaga:
Dipartimento di Matematica, Universit\`a di Padova,
Via Trieste 63,
35121 Padova (Italy)
\\ e-mail: acesar@math.unipd.it, novaga@math.unipd.it}
\author{Enrico Valdinoci} 
\address{Enrico Valdinoci:
Dipartimento di Matematica, Universit\`a di Milano,
Via Cesare Saldini 50,
20133 Milano (Italy)
\\ e-mail: enrico@math.utexas.edu}

\begin{abstract}
In 1978 E. De Giorgi fromulated a conjecture concerning the one-dimensional symmetry
of bounded solutions to the elliptic equation $\Delta u=F'(u)$, which are  monotone in some direction.
In this paper we prove the analogous statement for the equation $\Delta - \langle x,\nabla u\rangle u=F'(u)$,
where the Laplacian is replaced by the Ornstein-Uhlenbeck operator. Our theorem holds 
without any restriction on the dimension of the ambient space, and this allows us to obtain an similar result
in infinite dimensions by a limit procedure.
\end{abstract}

\maketitle

\section{Introduction}
A celebrated conjecture by De Giorgi \cite{degiorgi} asks if bounded entire solutions to the 
equation
\begin{equation}\label{eqdg}
\Delta u = u^3-u
\end{equation}
which are strictly increasing in some direction 
are one-dimensional, in the sense that the level sets 
$\{u=\lambda\}$ are hyperplanes, at least if $n\le 8$. 
This conjecture has been proved by Ghoussoub and Gui~\cite{GG} in dimension $n=2$,
and by Ambrosio and Cabr\'e \cite{AC} in dimension $n=3$, and 
a counterexample has been given by del Pino, Kowalczyk and Wei in \cite{DKW} for $n\ge 9$.
While the conjecture is still open for $4\le n\le 8$, a very nice proof has been presented by O. Savin
\cite{savin} under the additional assumption that $u$ connects $-1$ to $1$ along the direction where it increases.
See also~\cite{BCN} for another proof in dimension~$n=2$
and~\cite{FV} for a review on the subject.

In this paper, we are interested in a variant of \eqref{eqdg} 
where the Laplacian $\Delta$ is substituted by the Ornstein-Uhlenbeck operator $\Delta - \langle x,\nabla\rangle$.
Namely, we consider the semilinear elliptic equation  
\begin{equation}\label{ou}   
\Delta u- \langle x,\nabla u\rangle+f(u)=0\,
\end{equation}
and show the one-dimensional symmetry of bounded entire solutions 
which are monotone in some direction. 

Let us state our main result.
\begin{theorem}\label{mainfinite} 
Let $n\in\N$, $\alpha\in(0,1)$.
Let $u\in C^{2}(\R^n)\cap L^\infty(\R^n)$ be a solution of
$$ \Delta u-\langle x,\nabla u\rangle+f(u)=0
\quad{\mbox{ in }}\R^n,$$ where $f:\R\to \R$ is a locally Lipschitz function.
%$f(u)=-W'(u)$ and $W$ is a double well potential satisfying \eqref{pot}. 
Assume that  
\begin{equation}\label{monotonia} 
\langle \nabla u(x),w\rangle>0\qquad \text{for any $x \in\R^n$}
\end{equation} 
for some $w\in\R^n$.
Then, $u$ is one-dimensional, i.e. there exist
$U:\R\rightarrow\R$
and $\omega\in \R^n$ such that
$$ u(x)=U
(\langle \omega,x\rangle)$$
for any $x	\in \R^n$.
\end{theorem}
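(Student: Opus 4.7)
My plan is to adapt the Berestycki--Caffarelli--Nirenberg / Ambrosio--Cabr\'e scheme for the classical De Giorgi conjecture to the Gaussian setting, exploiting the fact that $\gga=(2\pi)^{-n/2}e^{-|x|^2/2}dx$ is a probability measure to bypass any dimensional restriction. After a rotation I may assume $w=e_n$, so that $v:=\partial_n u>0$ on $\R^n$. Differentiating $\Dg u + f(u)=0$ in the direction $e_i$ and using $\partial_i\langle x,\nabla u\rangle=\partial_i u+\langle x,\nabla\partial_i u\rangle$, every partial derivative satisfies the same linearized equation
\begin{equation*}
\Dg(\partial_i u)+\bigl(f'(u)-1\bigr)\partial_i u=0 \qquad\text{in }\R^n .
\end{equation*}
In particular $v$ and $\partial_i u$ solve the same linear equation with the same potential, and for $i=1,\dots,n-1$ the ratio $\sigma_i:=\partial_i u/v$ satisfies the conservative form
\begin{equation*}
\dg\bigl(v^{2}\nabla\sigma_i\bigr)=0 \qquad\text{in }\R^n .
\end{equation*}

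\textbf{Gaussian Liouville via global energy.} I would test this divergence identity against $\sigma_i\eta_R^2$, where $\eta_R$ is a smooth radial cutoff with $\eta_R\equiv 1$ on $B_R$, supported in $B_{2R}$ and satisfying $|\nabla\eta_R|\le C/R$. Gaussian integration by parts followed by Young's inequality gives
\begin{equation*}
\int_{\R^n} v^{2}\eta_R^{2}|\nabla\sigma_i|^{2}\,d\gga
\;\le\; C\int_{\R^n}(\partial_i u)^{2}|\nabla\eta_R|^{2}\,d\gga
\;\le\; \frac{C}{R^{2}}\int_{\R^n}|\nabla u|^{2}\,d\gga .
\end{equation*}
The essential input is the \emph{global} energy bound $\int_{\R^n}|\nabla u|^2\,d\gga<\infty$, which I would establish by testing the PDE against $u$ itself with a preliminary cutoff: the resulting identity $\int|\nabla u|^2\,d\gga=\int u\,f(u)\,d\gga$ is finite because $\gga$ is a probability measure and $u,f(u)$ are bounded. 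Letting $R\to\infty$ then forces $\nabla\sigma_i\equiv 0$, so each $\sigma_i$ is a constant $c_i$.

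\textbf{Conclusion and main obstacle.} From $\partial_i u=c_i\,\partial_n u$ for every $i<n$, the gradient $\nabla u(x)$ is at every point proportional to the fixed vector $\omega:=(c_1,\dots,c_{n-1},1)$, so $u$ is constant on each hyperplane orthogonal to $\omega$ and the representation $u(x)=U(\langle\omega,x\rangle)$ follows at once. The heart of the argument is the Liouville step: in the Euclidean problem the analogous energy on $B_R$ grows polynomially, which is exactly what makes the Ambrosio--Cabr\'e argument dimensionally limited; here the probabilistic nature of $\gga$ makes the total energy finite for free, so the Liouville conclusion should hold in every dimension. The most delicate technical point I expect is therefore the rigorous justification of the global Gaussian integration by parts underlying the energy bound, which requires a careful cutoff/approximation procedure but places no restriction on $n$.
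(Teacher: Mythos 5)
Your proposal is correct, and it takes a genuinely different route from the paper. The paper proves Theorem~\ref{mainfinite} as a special case of the infinite-dimensional Theorem~\ref{main}, whose core is the Sternberg--Zumbrun/Farina/FSV \emph{geometric Poincar\'e inequality}: starting from the same linearized equation, the authors first extract the stability inequality $\int|\nabla\varphi|^2-f'(u)\varphi^2\,d\gamma\ge-\int\varphi^2\,d\gamma$ by testing with $\varphi^2/u_1$ (morally the same algebraic identity you use); then they test stability with $|\nabla u|\varphi$ and the linearized equation with $u_i\varphi^2$, subtract, and rewrite $|\nabla^2u|^2-|\nabla|\nabla u||^2$ on the noncritical set as $|\nabla u|^2\mathcal K^2+|\nabla_T|\nabla u||^2$; a cutoff and the finiteness of $\int|\nabla u|^2\,d\gamma$ force the level-set curvatures and the tangential gradient of $|\nabla u|$ to vanish, giving one-dimensionality via \cite[Lemma 2.11]{FSV}. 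You instead run the Berestycki--Caffarelli--Nirenberg/Ambrosio--Cabr\'e scheme directly: the ratio $\sigma_i=u_i/u_n$ of two solutions of the common linearized equation satisfies $\divg(u_n^2\nabla\sigma_i)=0$, and a Caccioppoli estimate fed by the global energy bound yields the Liouville conclusion $\nabla\sigma_i\equiv0$. Both arguments hinge on exactly the same point you single out---$\int_{\R^n}|\nabla u|^2\,d\gamma<+\infty$ for free because $\gamma$ is a finite measure and $u$ is bounded---and this is precisely why neither carries a dimensional restriction. Your route is more elementary in $\R^n$ (no curvature computations, no appeal to the Sternberg--Zumbrun identity), whereas the geometric route was presumably chosen because it passes more uniformly to the abstract Wiener space, where there is no distinguished coordinate in which to form the quotient and one needs to control all finite-dimensional projections at once. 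Two small remarks: the $1/R^2$ factor in your Caccioppoli bound is superfluous (the integral over the annulus already tends to $0$ by dominated convergence, and the paper's cutoff likewise discards it), and one should justify the test-function manipulations by noting that under the hypotheses $u\in C^{2,\alpha}_{\mathrm{loc}}$, so $\sigma_i\in C^1$ and the divergence-form identity can be read weakly against $C^1_c$ test functions exactly as in the Euclidean case.
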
 

Notice that \eqref{ou} can be regarded as the analog of \eqref{eqdg}
in the so-called Gauss space, that is,
in $\R^n$ endowed with the Gaussian instead of the Lebesgue measure. Indeed,
while the Pde in~\eqref{eqdg} is the Euler-Lagrange equation
of the Allen-Cahn Energy 
\begin{equation}\label{fundg}
\int_{\R^n} \left(\frac{|\nabla u|^2}{2} + \frac{(u^2-1)^2}{4}\right)\,dx\,,
\end{equation}
the Pde in~\eqref{ou} is the Euler-Lagrange equation
of the functional
\begin{equation}\label{funou}
\int_{\R^n} \left(\frac{|\nabla u|^2}{2} + F(u)\right)\,d\gamma(x)\,,
\end{equation}
where $F'=-f$ and
\begin{equation}\label{gaussian}  
d\gamma(x)=\gamma(x)dx= \frac{e^{-|x|^2/2}}{(2\pi)^{n/2}} \,dx
\end{equation}
is the standard Gaussian probability measure. It is interesting
to remark that Theorem~\ref{mainfinite} holds for general type
of nonlinearities, as it happens for the conjecture of De Giorgi
when~$n\le3$ (see~\cite{AAC}, and this is a major difference
with respect to the techniques in~\cite{savin}).

As in the case of the Laplacian, Theorem \ref{mainfinite}
is closely related to the Bernstein problem in the Gauss space, which asks 
for flatness of entire minimal surfaces which are graphs in some direction.
We point out that minimal surfaces in the Gauss space are interesting geometric objects, 
since they correspond to self-similar shrinkers of the mean curvature flow (see for instance \cite{EH}), and satisfy the equation
\begin{equation}\label{ss} 
\kappa= \langle x, \nu \rangle 
\end{equation} 
where $\kappa$ is the mean curvature at $x$ and $\nu$ is the normal vector. In this context,
the analog of the Bernstein Theorem has been proved by Ecker and Huisken \cite{EH}, 
under a polynomial growth assumption  on the volume of the minimal surface, and more recently by Wang in \cite{W} without any further assumption.  
We point out that, differently from the Euclidean case, the result holds without any restriction on the dimension of the ambient space, and in fact
there is no such restriction also in Theorem \ref{mainfinite}. This is due to the exponential decay of the Gaussian measure associated 
to the Ornstein-Uhlenbeck operator which allows for better estimates than the corresponding Euclidean ones.

Since Theorem \ref{mainfinite} holds in any dimension and
the Gauss space $(\R^n,\gamma)$ formally converges to a  
Wiener space $(X,H,\gamma)$ (see Section \ref{secwiener} for a precise definition) as $n\to \infty$, one may expect that an analogous result holds in such 
infinite dimensional setting. 
Indeed, in this paper we confirm this expectation and show  the infinite dimensional extension of 
Theorem \ref{mainfinite}:

\begin{theorem}\label{main}
Let $u\in C^1(X)\cap L^\infty(X)$ 
satisfy 
\begin{equation}\label{PDE} 
\Delta_\gamma u = f(u)
\end{equation}
where $f:\R\to \R$ is a locally Lipschitz function.
Assume that 
\begin{equation}\label{regdue}
\partial_i\partial_j u\in C(X)\qquad {\rm for\ all\ }i,j\in\mathbb N 
\end{equation}
and
\begin{equation}\label{assw}
\inf_{x\in B_R}[\nabla u(x), w] >0 
\end{equation}
for all $x\in X$, for all $R>0$ and for some $w\in H$. Then, $u$
is one-dimensional, in the sense that there exist
$U:\R\rightarrow\R$ and $\omega\in X^*$ such that
\begin{equation}\label{uniwien}
u(x)=U(\langle \omega,x\rangle)
\qquad for\ all\ x	\in X.
\end{equation}
\end{theorem}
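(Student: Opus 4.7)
My plan is to reduce Theorem \ref{main} to the finite-dimensional Theorem \ref{mainfinite} by a cylindrical approximation and limit procedure, exploiting the crucial fact that Theorem \ref{mainfinite} holds in \emph{every} dimension $n$, without restriction. First, I would fix an orthonormal basis $\{h_j\}_{j \ge 1}$ of $H$ coming from $X^*$, decompose $X = X_n \oplus Y_n$ with $X_n := \mathrm{span}(h_1,\dots,h_n) \simeq \R^n$ and $\gamma = \gamma_n \otimes \gamma_n^\perp$, and arrange, after a rotation of the basis, that the monotonicity direction $w$ lies in $X_1 \subset X_n$ for every $n$. Under the regularity assumption \eqref{regdue}, the natural cylindrical approximants of $u$---for instance the conditional mean $u_n(x) := \int_{Y_n} u(x+y)\, d\gamma_n^\perp(y)$ or the trace $v_n(x) := u(\iota_n x)$ with $\iota_n : \R^n \hookrightarrow X$---lie in $C^2(\R^n) \cap L^\infty(\R^n)$, inherit the positivity of \eqref{assw} in direction $w$, and converge to $u$ locally in $C^1$ as $n \to \infty$.

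Next I would apply Theorem \ref{mainfinite} to a suitable such $u_n$ to obtain $u_n(x) = U_n(\langle \omega_n, x \rangle)$ for some $\omega_n \in X_n \subset X^*$ and some profile $U_n : \R \to \R$. Since \eqref{assw} forces $[\omega_n, w] > 0$, the normalized directions form a bounded sequence in $H$ and admit a subsequence converging to some $\omega \in H$; by local $C^1$ compactness the profiles $U_n$ converge uniformly on compact sets to some $U$. Passing to the limit then yields $u(x) = U(\langle \omega, x \rangle)$, with $\omega \in X^*$ via the standard identification of Cameron--Martin directions with measurable linear functionals on Wiener space.

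The principal obstacle is that none of the natural approximants $u_n$ satisfies the finite-dimensional Ornstein--Uhlenbeck equation $\Delta u_n - \langle x, \nabla u_n \rangle + f(u_n) = 0$ \emph{exactly}: the conditional mean leaves the residual $\int f(u(x+y))\, d\gamma_n^\perp(y) - f(u_n(x))$ (nonzero unless $f$ is affine), while the trace leaves a residual involving the ``perpendicular'' Hessian trace $\sum_{j > n} \partial_j^2 u(\iota_n x)$. Bridging this gap---either by constructing an approximant that solves the finite-dimensional equation exactly, or by showing that Theorem \ref{mainfinite} is robust under a perturbation of the right-hand side that vanishes on bounded sets as $n \to \infty$---is the essential technical step, and is where the full strength of the hypothesis \eqref{regdue} on all mixed second derivatives should come into play.
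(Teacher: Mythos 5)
The gap you flag at the end is real, and the paper does not close it: it takes a genuinely different route that never requires a finite-dimensional approximant of $u$ to solve the Ornstein--Uhlenbeck equation. Instead of reducing to Theorem~\ref{mainfinite}, the paper works directly in the Wiener space. First, the monotonicity \eqref{assw} together with the equation satisfied by the derivatives $u_i$ yields a \emph{stability inequality}
\[
\int_X |\nabla\varphi|^2 - f'(u)\varphi^2\,d\gamma \;\ge\; -\int_X\varphi^2\,d\gamma, \qquad \varphi\in W^{1,2}(X).
\]
Testing this with $|\nabla u|\,\varphi$ and comparing with the linearized equation tested against $u_i\varphi^2$ gives the Sternberg--Zumbrun-type geometric Poincar\'e inequality
\[
\int_X\bigl(|\nabla^2 u|^2 - \bigl|\nabla|\nabla u|\bigr|^2\bigr)\varphi^2\,d\gamma \;\le\; \int_X |\nabla u|^2|\nabla\varphi|^2\,d\gamma .
\]
The key algebraic observation is then that $\mathcal D_N := |\nabla_N^2 u|^2 - \bigl|\nabla_N|\nabla_N u|\bigr|^2$ is \emph{nondecreasing in $N$}, hence bounded above by the integrand on the left; on the noncritical set of the slice $\psi_{N,\overline x_N}(\underline x_N) = u(\underline x_N,\overline x_N)$ this $\mathcal D_N$ equals the curvature plus tangential-gradient term $|\nabla_N u|^2\mathcal K_N^2 + \bigl|\nabla_{T,N}|\nabla_N u|\bigr|^2$. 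Choosing $\varphi = \Phi(|x|)$ with $\Phi$ a cutoff at scale $R$ and sending $R\to\infty$, Gaussian integrability of $|\nabla u|^2$ forces the right-hand side to zero, so the curvatures and tangential gradients of the level sets of every finite-dimensional slice vanish identically, and the slices are one-dimensional by \cite[Lemma 2.11]{FSV}. The limit $N\to\infty$ in the paper is then a limit of \emph{directions} $\omega_{N,0}$, not of approximate solutions: each slice is already exactly one-dimensional.

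In short, your plan tries to transfer the \emph{PDE} to a cylindrical approximant, which, as you correctly identify, leaves an uncontrollable residual (Jensen gap for the conditional mean, perpendicular Hessian trace for the restriction); the paper transfers the \emph{stability inequality} and exploits the monotonicity of $\mathcal D_N$ in $N$, so that the slices need not satisfy any PDE at all. Your fallback suggestion---proving a perturbed version of Theorem~\ref{mainfinite} with a right-hand-side error tending to zero---would be delicate because the conclusion of Theorem~\ref{mainfinite} is a rigidity (exact one-dimensionality) statement rather than a quantitative estimate, so it is not obviously stable under such perturbations. Without resolving that, the proposal as written does not constitute a proof.
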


Notice that Theorem \ref{mainfinite} can be recovered as a corollary of Theorem \ref{main},
when the function $u$ depends only on finitely many variables.
As far as we know, Theorem~\ref{mainfinite} is the first
result of De Giorgi conjecture type in an infinite dimensional setting.
The proof that we perform exploits and generalizes
some geometric ideas of~\cite{SZ1, SZ2, Far, FSV}.

\section{Notation} 

We denote by $(\R^n, \gamma)$ the $n$-dimensional Gauss space, 
where $\gamma$ is the standard Gaussian measure on $\R^n$ defined in \eqref{gaussian}.    

\subsection{The Wiener space}\label{secwiener}

An abstract Wiener space is defined as a triple $(X,\gamma,H)$ where
$X$ is a separable Banach space, endowed with the norm $\|\cdot\|_X$,
$\gamma$ is a nondegenerate centered Gaussian measure,
and $H$ is the Cameron--Martin space associated to the measure $\gamma$, that is,
$H$ is a separable Hilbert space densely embedded in $X$, endowed with the inner product
$[\cdot, \cdot ]_H$ and with the norm $|\cdot |_H$. The
requirement that $\gamma$ is a centered Gaussian measure means that
for any $x^*\in X^*$, the measure $x^*_\#\gamma$ is a centered Gaussian
measure on the real line $\R$, that is, the Fourier transform
of $\gamma$ is given by
\[
\hat \gamma(x^*) = \int_X %\scal{x}{x^*}\scal{x}{x^*}
e^{-i\scal{x}{x^*}}d\gamma (x)=\exp\left(
-\frac{\scal{Qx^*}{x^*}}{2}
\right),\qquad \forall x^*\in X^*;
\]
here the operator $Q\in {\mathcal L}(X^*,X)$ is the covariance operator and it is
uniquely determined by formula
\[
\scal{Qx^*}{y^*}=\int_X \scal{x}{x^*}\scal{x}{y^*}d\gamma(x),\qquad \forall x^*,y^*\in X^*.
\]
The nondegeneracy of $\gamma$ implies that $Q$ is positive definite: the boundedness
of $Q$ follows by Fernique's Theorem (see for instance \cite[Theorem 2.8.5]{B}), 
asserting that there exists a positive number $\beta>0$
such that
\[
 \int_X e^{\beta\|x\|^2}d\gamma(x)<+\infty.
\]
This implies also that the maps $x\mapsto \scal{x}{x^*}$ belong to 
$L^p_\gamma (X)$ for any $x^*\in X^*$ and $p\in [1,+\infty)$, where $L^p_\gamma (X)$ 
denotes the space of all functions $f:X\to \R$
such that 
$$
\int_X |f(x)|^p d\gamma(x)<+\infty.
$$
In particular, any element $x^*\in X^*$ can be seen as a map $x^*\in L^2_\gamma (X)$, and
we denote by $R^*: X^*\to {\mathcal H}$ the identification map $R^*x^*(x):=\scal{x}{x^*}$.
The space ${\mathcal H}$ given by the closure of $R^*X^*$ in $L^2_\gamma (X)$
is called {\it reproducing kernel}. By considering the map $R: {\mathcal H}\to X$
defined as
\[
R\hat{h} := \int_X \hat{h}(x)xd\gamma(x),
\]
we obtain that $R$ is an injective $\gamma$--Radonifying operator, which is
Hilbert--Schmidt when $X$ is Hilbert. We also have  
$Q=RR^*:X^*\to X$. The space $H:=R{\mathcal H}$, equipped with the inner product $[\cdot,\cdot]_H$
and norm $|\cdot|_H$ induced by ${\mathcal H}$ via $R$, is the Cameron-Martin space  
and is a dense subspace of $X$. The continuity of $R$ implies
that the embedding of $H$ in $X$ is continuous, that is, there exists $c>0$
such that
\[
 \|h\|_X \leq c|h|_H,\qquad \forall h\in H.
\] 
We have also that the measure $\gamma$ is absolutely continuous with respect
to translation along Cameron--Martin directions; in fact, for $h\in H$, $h=Qx^*$, 
the measure
$\gamma_h(B)=\gamma(B-h)$ is absolutely continuous with respect to $\gamma$ with
density given by
\begin{equation}\label{absCont}
d\gamma_h(x)=\exp\left(
\scal{x}{x^*}-\frac{1}{2}|h|_H^2
\right)d\gamma(x) .
\end{equation}

\subsection{Cylindrical functions and differential operators}

For $j\in \N$ we choose $x^*_j\in X^*$ in such a way that 
$\hat h_j:= R^*x_j^*$, or equivalently $h_j:=R\hat h_j=Qx^*_j$, form an orthonormal basis
of $H$. 
We order the vectors $x^*_j$ in such a way that the numbers $\lambda_j:=\|x^*_j\|_{X^*}^{-2}$
form a decreasing sequence.
%For all $x\in X$, we let $x_j=\scal{x}{x^*_j}$.
Given $m\in\mathbb N$, we also let $H_m:=\langle h_1,\ldots, h_m\rangle\subseteq H$, 
and $\Pi_m: X\to H_m$ be the closure of the orthogonal projection from $H$ to $H_m$
\[
\Pi_m(x) := \sum_{j=1}^m \scal{x}{x^*_j}\, h_j \qquad x\in X.
\]
The map $\Pi_m$ induces the decomposition $X\simeq H_m\oplus X_m^\perp$, with $X_m^\perp:= {\rm ker}(\Pi_m)$,
and $\gamma=\gamma_m\otimes\gamma_m^\perp$,
with $\gamma_m$ and $\gamma_m^\perp$ Gaussian measures on $H_m$ and $X_m^\perp$ respectively, 
having $H_m$ and $H_m^\perp$ as Cameron--Martin spaces. 
When no confusion is possible we identify $H_m$ with $\R^m$;
with this identification the measure
$\gamma_m={\Pi_m}_\#\gamma$ is the standard Gaussian measure on $\R^m$ (see \cite{B}).
Given $x\in X$,
we denote by $\underline x_m\in H_m$ the projection $\Pi_m(x)$, 
and by $\overline x_m\in X_m^\perp$ the infinite dimensional component of $x$, so that $x=\underline x_m+\overline x_m$.
When we identify $H_m$ with $\R^m$ we shall rather write 
$x=(\underline x_m,\overline x_m)\in \R^m\oplus X_m^\perp$.

%Given $u\in \Ldeu$, we will consider the canonical cylindrical approximation $\bEm$ given by
%\[
%\bEm u (x)=\int_{X_m^\perp} u(\Pi_m(x),y) \,d\gga_m^\perp(y).
%\]
%Notice that $\bEm u$ depends only on the first $m$ variables 
%and $\bEm u$ converges  to $u$ in $\Ldeu$.
%Such functions are usually called {\em cylindrical functions}.

We say that $u:X\to \R$ is a {\em cylindrical function} if $u(x)=v(\Pi_m (x))$ for some $m\in\mathbb N$ and 
$v:\R^m\to \R$. 
%For all $m\in\mathbb N$, we also let $u_m:\R^m\to \R$ be the  restriction of $u$ to $H_m\sim\R^m$, that is
%$$
%u_m(\underline x_m):=u(\underline x_m,0)\qquad \underline x_m\in \R^m\,.
%$$
We denote by $\FCbk(X)$, $k\in\mathbb N$, 
the space of all $C^k_b$ cylindrical functions, that is, functions of the form $v(\Pi_m (x))$
with $v\in C^k(\R^n)$, with continuous and bounded derivatives up to the order $k$. 
We denote by $\FCbk(X,H)$ the space generated by all functions of the form 
$u h$, with $u\in \FCbk(X)$ and $h\in H$.

We let
\[
\begin{array}{ll}
\nabla_\gamma u := \sum_{j\in\mathbb N}\partial_j u\, h_j & {\rm for\ } u\in \FCb(X)
\\
\\
\divg \phi := \sum_{j\geq 1}\partial^*_j [\phi,h_j]_H & {\rm for\ }\phi\in \FCb(X,H)
\\
\\
\Dg u := \divg\nabla_\gamma u & {\rm for\ } u\in \FCbt(X)
\end{array}
\]
where $\partial_j := \partial_{h_j}$ and
$\partial_j^* := \partial_j - \hat h_j$ is the adjoint operator of $\partial_j$.
With this notation, the integration by parts formula holds:
\begin{equation}\label{inp}
\int_X u\, \divg \phi\,d\gamma = -\int_X [\nabla_\gamma u,\phi]_H\, d\gamma
\qquad \forall \phi\in \FCb(X,H).
\end{equation}
In particular, thanks to \eqref{inp}, the operator $\nabla_\gamma$ is closable in $L^p_\gamma(X)$,
and we denote by $W^{1,p}_\gamma(X)$ the domain of its closure. The Sobolev spaces 
$W^{k,p}_\gamma(X)$, with $k\in\mathbb N$ and $p\in [1,+\infty]$, can be defined analogously \cite{B},
and $\FCbk(X)$ is dense in $W^{j,p}_\gamma(X)$, for all $p<+\infty$ and $k,j\in\mathbb N$ with $k\ge j$.

Given a vector field $\phi \in L^{p}_\gamma(X,H)$, $p\in (1,\infty]$, using \eqref{inp} we can define
$\mathrm{div}_\gamma \, \phi$ in the distributional sense,
taking test functions $u$ in  $W^{1,q}_\gamma(X)$ with
$\frac{1}{p}+\frac{1}{q} = 1$. %by the formula
%\begin{equation}\label{div1a}
%\int_X \mathrm{div}_\gamma\, z \, f\, d\gamma := - \int_X [z,\nabla_\gamma f]_H\, d\gamma.
%\end{equation}
%Since $FC^1_b(X,H)$ is dense in
%$W^{1,q}(X,\gamma)$, $\mathrm{div}_\gamma\, \phi$ is uniquely determined by its action on the test functions, and 
We say that
$\mathrm{div}_\gamma\, \phi \in L^p_\gamma(X)$ if this linear functional can be extended to all test functions 
$u\in L^{q}_\gamma(X)$. This is true in particular if $\phi\in W^{1,p}_\gamma(X,H)$.

Let $u\in W^{2,2}_\gamma(X)$, $\psi\in \FCb(X)$ and $i,j\in \mathbb N$.
{}From \eqref{inp}, with $u=\partial_j u$ and $\p=\psi h_i$, we get
\begin{equation}\label{parts}
\int_X \partial_j u\,\partial_{i}\psi \,d\gamma = \int_X -\partial_j(\partial_{i}u)\,\psi+ \partial_ju\,\psi\langle x^*_i,x\rangle d\gamma
\end{equation}
Let now $\p\in \FCb(X,H)$. If we apply \eqref{parts} with $\psi=[\p,h_j]=:\p^j$, we obtain 
\[
\int_X \partial_j u\,\partial_{i}\p^j \,d\gamma = \int_X -\partial_j(\partial_{i}u)\,\p^j
+ \partial_ju\,\p^j\langle x^*_i,x\rangle d\gamma
\]
which, summing up in $j$, gives 
\begin{equation}\label{partbis}
\int_X [\nabla_\gamma u,\partial_i \p]\,d\gamma = \int_X -[\nabla_\gamma (\partial _i u), \p] 
+  [\nabla_\gamma u,\p] \langle x^*_i,x\rangle d\gamma
\qquad \forall \p\in \FCb(X,H).
\end{equation}

\smallskip

The operator $\Dg:W^{2,p}_\gamma(X)\to L^p_\gamma(X)$ is usually called the {\em Ornstein-Uhlenbeck operator}.
%and it generates the analytic semigroup
%\[
%T_t u(x) := \int_X u\left(e^{-t} x+\sqrt{1-e^{-2t}}y\right) \, d\gga(y)
%\qquad u\in L^p_\gamma(X).
%\]
%
Notice that, if $u$ is a cylindrical function, that is $u(x)=v(y)$ with  
$y=\Pi_m(x)\in\R^m$ and $m\in\mathbb N$,
then
\begin{equation}\label{finvar}
\Dg u = \sum_{j=1}^m \partial_{jj}u-\langle x_j^*,x\rangle\partial_{j}u = \Delta v - \langle y,\nabla v \rangle_{\R^m}\,.
\end{equation}

%\begin{proposition}[\cite{AMMP}]\label{ornstein}
%The Ornstein-Uhlenbeck semigroup $T_t$ satisfies:
%\begin{itemize}
%\item[-] if $u \in L^p_\gga(X)$, with $p>1$, then $T_t u \in W^{1,p}_\gga(X)$;
%\item[-] if $u \in L^p_\gga(X)$, with $p>1$, then $T_t u\to u$ in $L^p_\gga(X)$ when $t\to 0$;
%\item[-] for every $\phi \in \FCb(X,H)$, and $u\in \Ldeu$,
%\begin{equation}\label{integpartOU}
%\int_X T_t u \,\dive_\gga \phi \, d\gga= e^{-t} \int_X u \,\dive_\gga T_t \Phi\,  d\gga;
%\end{equation}
%\item[-] if $\phi \in \FCb(X,H)$ then $T_t \phi \in \FCb(X,H)$;
%%\item[-] for every convex function $F: H \to \R\cup \{+\infty\}$ and every $\phi\in \FCb(X,H)$,
%%\begin{equation}\label{jensF}\int_X F(T_t \phi) d\gga \le \int_X F(\phi) \, d\gga.\end{equation} 
%\end{itemize}
%\end{proposition}
%\noindent The proof of this proposition can be found in \cite{AMMP}. 
%The only additional property here is \eqref{jensF} which follows from Jensen's inequality and the rotation invariance of the measure $\gga$. 
%\begin{remark}\rm
%Notice that \eqref{integpartOU} holds more generally for $u$ in the Orlicz space $L\log^{\frac{1}{2}}L$ but not for a general $u$ in $L^1_\gga(X)$ 
%(see \cite{AMMP}).
%\end{remark}

We write $u\in C(X)$ if $u: X\to \R$ is continuous and 
$u\in C^1(X)$ if
both $u: X\to \R$ and $\nabla_\gamma u:X\to H$ are continuous. 

{F}or simplicity of notation, from now on we will omit the explicit dependence on $\gamma$ of operators and spaces.
We also indicate by $[\cdot, \cdot ]$ and $|\cdot |$ respectively the scalar product and the norm in $H$.
When no confusion is possible, we shall also write $u_i$ to indicate the derivative $\partial_i u$.

\section{Proof of Theorem \ref{main}}

Recalling the integration by parts formula \eqref{inp}, equation \eqref{PDE} can be written in a weak form as

\begin{equation}\label{PDEweak}
\int_X [\nabla u,\nabla \varphi]-f(u)\varphi \,d\gamma=0\qquad {\rm\ for\ any\ }\varphi\in W^{1,2}(X)
\end{equation}
which is meaningful for $u\in W^{1,2}(X)$. Notice that, as $\FCb(X)$ is dense in $W^{1,2}(X)$, 
it is enough to require \eqref{PDEweak} for all $\p\in \FCb(X)$.

\begin{remark}\rm
Since $L^\infty(X)\subset L^2(X)$, by \cite[Th. 4.1]{lunardi}
we have that a bounded weak solution of \eqref{PDE} belongs to $W^{2,2}(X)$.
\end{remark}

\subsection{The linearized equation}

We now consider the equation solved by the derivatives of the solution $u$.

\begin{lemma}
Let $u\in W^{2,2}(X)$
satisfy \eqref{PDE}. For any $i\in\N$ let $u_i=\partial_i u\in W^{1,2}(X)$, then
\begin{equation}\label{linear} 
\int_X [\nabla u_i,\nabla \varphi]
-f'(u)u_i\varphi+ u_i\varphi\,d\gamma=0
\qquad {\rm\ for\ any\ }\varphi\in W^{1,2}(X).
\end{equation}
\end{lemma}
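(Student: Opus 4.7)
The plan is to formally differentiate the weak equation \eqref{PDEweak} in the Cameron-Martin direction $h_i$, using the second-order integration-by-parts formula \eqref{partbis} as the key technical tool to move the $\partial_i$ derivative from $\varphi$ onto $\nabla u$. By density of $\FCbt(X)$ in $W^{1,2}(X)$, it is enough to establish \eqref{linear} for test functions $\varphi\in\FCbt(X)$, so that $\partial_i\varphi\in\FCb(X)$ and $\nabla\varphi\in\FCb(X,H)$ are admissible.

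First I would substitute $\partial_i\varphi$ into \eqref{PDEweak} to obtain
\[
\int_X [\nabla u,\nabla\partial_i\varphi]\,d\gamma=\int_X f(u)\,\partial_i\varphi\,d\gamma.
\]
The left-hand side is handled by \eqref{partbis} applied with $\p=\nabla\varphi$, together with the commutation $\partial_i\nabla\varphi=\nabla\partial_i\varphi$ which is valid on cylindrical $C^2$ functions; this yields
\[
\int_X [\nabla u,\nabla\partial_i\varphi]\,d\gamma=\int_X -[\nabla u_i,\nabla\varphi]+[\nabla u,\nabla\varphi]\,\hat h_i\,d\gamma.
\]
For the right-hand side I would use the one-variable integration-by-parts identity $\int_X g\,\partial_i\psi\,d\gamma=\int_X (g\hat h_i-\partial_i g)\psi\,d\gamma$ (equivalent to $\partial_i^*=\partial_i-\hat h_i$), applied to $g=f(u)$ and $\psi=\varphi$, giving
\[
\int_X f(u)\,\partial_i\varphi\,d\gamma=\int_X f(u)\,\hat h_i\,\varphi\,d\gamma-\int_X f'(u)\,u_i\,\varphi\,d\gamma.
\]

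The remaining cross term $\int_X [\nabla u,\nabla\varphi]\,\hat h_i\,d\gamma$ I would eliminate by inserting the test function $\hat h_i\varphi\in W^{1,2}(X)$ into \eqref{PDEweak}. Since $\nabla\hat h_i=h_i$, one has $\nabla(\hat h_i\varphi)=\hat h_i\nabla\varphi+\varphi\,h_i$, whence $[\nabla u,\nabla(\hat h_i\varphi)]=\hat h_i[\nabla u,\nabla\varphi]+u_i\,\varphi$, and therefore
\[
\int_X \hat h_i[\nabla u,\nabla\varphi]\,d\gamma=\int_X f(u)\,\hat h_i\,\varphi\,d\gamma-\int_X u_i\,\varphi\,d\gamma.
\]
Combining the three displayed identities, the two terms containing $f(u)\hat h_i\varphi$ cancel and one is left with $\int_X [\nabla u_i,\nabla\varphi]-f'(u)u_i\varphi+u_i\varphi\,d\gamma=0$, which is \eqref{linear}.

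The main obstacle I anticipate is not algebraic but technical: verifying that $\hat h_i\varphi$ is a legitimate test function in $W^{1,2}(X)$ and that \eqref{partbis} applies for $\p=\nabla\varphi$ when $u\in W^{2,2}(X)$ rather than on the dense subclass $\FCbt$. Both points follow from the $W^{2,2}$-regularity recalled in the preceding remark together with the integrability of $\hat h_i=\langle x_i^*,\cdot\rangle$ in all $L^p_\gamma(X)$ (a consequence of Fernique's theorem), which allow the usual approximation and dominated-convergence arguments to carry the identity from $\FCbt$ to $W^{1,2}(X)$.
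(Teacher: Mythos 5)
Your proof is correct and follows essentially the same route as the paper: differentiate the weak equation by inserting $\partial_i\varphi$, apply \eqref{partbis} to shift the derivative onto $\nabla u$, integrate by parts on the $f(u)\partial_i\varphi$ term, and then cancel the resulting cross term $\int_X \hat h_i[\nabla u,\nabla\varphi]\,d\gamma$ by testing \eqref{PDEweak} with $\hat h_i\varphi$. The only cosmetic difference is that the paper organizes this as a single chain of equalities starting from $0=\int_X[\nabla u,\nabla\varphi_i]-f(u)\varphi_i\,d\gamma$, whereas you separate the three ingredients before combining them; the underlying manipulations and the density/approximation considerations are identical.
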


\begin{proof}
Notice first that it is enough to prove \eqref{linear} for all $\p\in \FCbt(X)$.
Letting $\p\in \FCbt(X)$, we
multiply \eqref{PDE} by $\p_i$ and recall \eqref{partbis}, to get
\begin{eqnarray*}
0 &=&
\int_X [\nabla u,\nabla \p_i]-f(u)\p_i\,d\gamma
\\ &=& 
\int_X -[\nabla u_i,\nabla \p]  + \langle x_i^*,x\rangle
[\nabla u,\nabla \p] - f(u)\p_i\,d\gamma
\\ &=&
\int_X -[\nabla u_i,\nabla \p]  + \langle x_i^*,x\rangle
[\nabla u,\nabla \p] 
+f'(u)u_i \p -\langle x_i^*,x\rangle x_i f(u)\p\,d\gamma
\\ &=&
\int_X -[\nabla u_i,\nabla \p]
+[ \nabla u,\nabla (\langle x_i^*,x\rangle \p)-\p \nabla \langle x_i^*,x\rangle ]
+f'(u)u_i \p -\langle x_i^*,x\rangle x_i f(u)\p\,d\gamma
\\ &=&
\int_X -[\nabla u_i,\nabla \p]
-\p [ \nabla u,\nabla \langle x_i^*,x\rangle ]
+f'(u)u_i \p\,d\gamma,
\end{eqnarray*}
where the last inequality follows from 
\eqref{PDEweak}, with $\p$ replaced by
$\langle x_i^*,x\rangle \p$.
\end{proof}

\subsection{A variational inequality implied by the monotonicity}

The next result shows that monotone solutions of \eqref{PDE}
satisfy a variational inequality. In the Euclidean case,
this fact boils down to the classical stability condition
(namely, the second derivative of the energy functional
being nonnegative). Differently from this, in our case,
a negative eigenvalue appears in the inequality.
 
\begin{lemma}\label{monotone}
Let $u\in W^{2,2}(X)$
satisfy \eqref{PDE} and \eqref{assw}. Then,
for any $\varphi\in W^{1,2}(X)$ it holds
\begin{equation}\label{stable}
\int_X |\nabla\varphi|^2-f'(u)\varphi^2\,d\gamma
\ge- \int_X \varphi^2\,d\gamma.\end{equation}
\end{lemma}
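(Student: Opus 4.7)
The plan is to run the classical Sternberg--Zumbrun argument on the linearized identity \eqref{linear}: the extra zero-order term $+u_i\varphi$ produced there by the drift $-\langle x,\nabla\rangle$ will be precisely what generates the $-\int \varphi^2\,d\gamma$ on the right-hand side of \eqref{stable}.

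First I would define the directional derivative $v := [\nabla u, w] \in W^{1,2}(X)$, which by \eqref{assw} is strictly positive on all of $X$. By linearity of \eqref{linear} applied to $v = \sum_i [w,h_i]\,u_i$ (with absolute convergence of the sum ensured by Cauchy--Schwarz using $w\in H$ and $u\in W^{2,2}(X)$), the function $v$ satisfies
\[
\int_X [\nabla v,\nabla \zeta] - f'(u)\, v\, \zeta + v\, \zeta\,d\gamma = 0 \qquad \forall\,\zeta\in W^{1,2}(X).
\]
Given $\varphi\in W^{1,2}(X)\cap L^\infty(X)$ and $\eps > 0$, I would test this with $\zeta := \varphi^2/(v+\eps)$, which is admissible in $W^{1,2}(X)$ since $1/(v+\eps) \leq 1/\eps$ is bounded. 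A direct computation gives
\[
[\nabla v,\nabla \zeta] = \frac{2\varphi\,[\nabla v,\nabla \varphi]}{v+\eps} - \frac{\varphi^2\,|\nabla v|^2}{(v+\eps)^2},
\]
and Cauchy--Schwarz applied to the cross term, $2\varphi[\nabla v,\nabla \varphi]/(v+\eps) \leq |\nabla \varphi|^2 + \varphi^2|\nabla v|^2/(v+\eps)^2$, cancels exactly the quadratic-in-$\nabla v$ contribution, leaving
\[
\int_X |\nabla \varphi|^2 - f'(u)\,\frac{v}{v+\eps}\,\varphi^2 + \frac{v}{v+\eps}\,\varphi^2\,d\gamma \geq 0.
\]

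To finish I would send $\eps\downarrow 0$: since $0 \leq v/(v+\eps) \leq 1$ and $v/(v+\eps)\to 1$ pointwise, dominated convergence (using $\varphi,\,f'(u) \in L^\infty$, the latter because $u\in L^\infty$ and $f$ is locally Lipschitz) yields \eqref{stable} for bounded $\varphi$, and the general case then follows from the truncation $\varphi_N := (-N)\vee\varphi\wedge N$, which converges to $\varphi$ in $W^{1,2}(X)$. The main obstacle I anticipate is the bookkeeping of this double limit and the chain-rule verification that $\varphi^2/(v+\eps)$ is a legitimate test function; the conceptual point that makes the whole argument run verbatim in infinite dimensions -- with no need for any spatial cutoff, which would be problematic in $X$ anyway since balls are not compact -- is that \eqref{assw} gives strict positivity of $v$ on all of $X$, so the $\eps$-regularization alone suffices to produce an admissible Sobolev test function while keeping the quadratic cancellation intact in the limit.
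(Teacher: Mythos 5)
Your proof is correct and follows essentially the same route as the paper: test the linearized identity \eqref{linear} with $\varphi^2$ divided by the (strictly positive) directional derivative, and let the Cauchy--Schwarz/complete-the-square cancellation absorb the quadratic term in $\nabla v$, so that the extra zero-order term from the Ornstein--Uhlenbeck drift produces the $-\int\varphi^2\,d\gamma$ on the right. The only differences are cosmetic and slightly in your favor on rigor: the paper takes $w=h_1$ without loss of generality and tests with $\varphi^2/u_1$ directly, invoking a density argument for the admissible class of $\varphi$, whereas you keep $w\in H$ general via $v=\sum_i[w,h_i]u_i$ and regularize with $\varphi^2/(v+\eps)$ followed by a truncation, which avoids having to justify that density claim.
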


\begin{proof} 
The proof is a variation of a classical technique (see, e.g., \cite{AAC, FSV}).\\
Without loss of generality we may assume $w=h_1$, and we
let $\varphi\in W^{1,2}(X)$ be such that $\varphi^2/u_1\in W^{1,2}(X)$. Notice that, thanks to \eqref{assw}, 
the space of such functions is dense in $W^{1,2}(X)$.
We use \eqref{linear}, with $i=1$ and test function $\varphi^2/u_1$, and we obtain 
\begin{equation*}\begin{split}
&\phantom{=}
\int_X f'(u)\varphi^2- \varphi^2\,d\gamma\\
&=
\int_X [ \nabla u_1,\nabla (\varphi^2/u_1)]\,d\gamma
\\ &=
\int_X 2(\varphi/u_1)[ \nabla u_1,\nabla \varphi]
-(\varphi/u_1)^2|\nabla u_1|^2\,d\gamma
\\ &=
\int_X |\nabla\varphi|^2 -\Big| (\varphi/u_1) \nabla u_1-\nabla \varphi\Big|^2\,d\gamma
\\ &\le\int_X |\nabla\varphi|^2\,d\gamma.\qedhere
\end{split}\end{equation*}
%Notice that the functions $\phi$ such that $\varphi^2/u_1\in $
\end{proof}

\subsection{A geometric Poincar\'e inequality}

We show that a sort of geometric
Poincar\'e inequality stems from solutions of \eqref{PDE}
satisfying \eqref{stable}.
%The weights involved stem from geometric objects. 
In the Euclidean case, it boils
down to the inequality discovered in \cite{SZ1, SZ2}.

\begin{lemma}
Let $u\in W^{2,2}(X)$ satisfy \eqref{PDE} and \eqref{stable}. 
For any $\varphi\in W^{1,\infty}(X)$ we have
\begin{equation}\label{8.1}
\int_X \big(|\nabla^2u|^2-\big|\nabla |\nabla u|\big|^2\big)\varphi^2
\,d\gamma\le\int_X
|\nabla u|^2|\nabla \varphi|^2\,d\gamma
\end{equation}
where
$$|\nabla^2 u|^2:=\sum_{i,j} u_{ij}^2\,.$$
\end{lemma}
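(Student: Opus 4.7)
\emph{Proof plan.}
The plan is to combine the variational inequality \eqref{stable} with an identity coming from the linearized equation \eqref{linear}, choosing test functions in the two estimates so that their cross terms match exactly and cancel upon subtraction.

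First, I would apply \eqref{stable} to the test function $\eta := |\nabla u|\,\varphi$, which belongs to $W^{1,2}(X)$ since $u\in W^{2,2}(X)$ and $\varphi\in W^{1,\infty}(X)$. Expanding
$$|\nabla \eta|^2 \;=\; \varphi^2\bigl|\nabla|\nabla u|\bigr|^2 + |\nabla u|^2|\nabla\varphi|^2 + 2\varphi|\nabla u|\,\bigl[\nabla|\nabla u|,\nabla\varphi\bigr],$$
\eqref{stable} yields
$$\int_X \varphi^2\bigl|\nabla|\nabla u|\bigr|^2 + |\nabla u|^2|\nabla\varphi|^2 + 2\varphi|\nabla u|\,\bigl[\nabla|\nabla u|,\nabla\varphi\bigr]\,d\gamma \;\geq\; \int_X \bigl(f'(u)-1\bigr)|\nabla u|^2\varphi^2\,d\gamma.$$

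Second, for each $i\in\N$ I would use \eqref{linear} with test function $\varphi^2 u_i$ and sum over $i$. Using the pointwise identities $|\nabla^2 u|^2=\sum_{i}|\nabla u_i|^2$ and $\sum_{i}u_i\nabla u_i=|\nabla u|\,\nabla|\nabla u|$ (the latter obtained from $\tfrac12\nabla|\nabla u|^2=\sum_i u_i\nabla u_i$), this gives the matching equality
$$\int_X \varphi^2|\nabla^2 u|^2 + 2\varphi|\nabla u|\,\bigl[\nabla|\nabla u|,\nabla\varphi\bigr]\,d\gamma \;=\; \int_X \bigl(f'(u)-1\bigr)|\nabla u|^2\varphi^2\,d\gamma.$$
Subtracting this identity from the inequality above, the $(f'(u)-1)|\nabla u|^2\varphi^2$ terms and the cross terms $2\varphi|\nabla u|\bigl[\nabla|\nabla u|,\nabla\varphi\bigr]$ cancel together, leaving exactly \eqref{8.1}.

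The main technical obstacle is the infinite-dimensional summation, together with the chain rule for $|\nabla u|$ on its possible zero set. The interchange of $\sum_i$ with the integral is legitimized by $\sum_{i}|\nabla u_i|^2=|\nabla^2 u|^2\in L^1(X,\gamma)$ and dominated convergence; the non-smoothness of $t\mapsto|t|$ at the origin is handled in the standard fashion by replacing $|\nabla u|$ with $G_\epsilon:=\sqrt{|\nabla u|^2+\epsilon^2}$, running both computations with this regularization (so the identities above hold classically, up to an $O(\epsilon^2)$ correction coming from $\sum_i u_i^2 = G_\epsilon^2-\epsilon^2$), and finally letting $\epsilon\to 0^+$. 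The $O(\epsilon^2)$ correction vanishes in the limit, so the cancellation structure is preserved and \eqref{8.1} follows.
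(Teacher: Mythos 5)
Your proposal is correct and follows the paper's argument almost verbatim: test \eqref{stable} with $|\nabla u|\varphi$, test \eqref{linear} with $u_i\varphi^2$ and sum over $i$, then subtract so that both the $(f'(u)-1)|\nabla u|^2\varphi^2$ terms and the cross terms (which you write as $2\varphi|\nabla u|[\nabla|\nabla u|,\nabla\varphi]$ and the paper writes equivalently as $\tfrac12[\nabla|\nabla u|^2,\nabla\varphi^2]$) cancel. The closing remarks on the $G_\epsilon$-regularization of $|\nabla u|$ and the $L^1$-justification for interchanging sum and integral are sensible technical polish that the paper leaves implicit.
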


\begin{proof} 
We use \eqref{stable} with test function $|\nabla u|\,\varphi$, and we see that
\begin{equation}\label{sz01}
\begin{split}
&\phantom{=}
\int_X \big(f'(u)-1\big)
|\nabla u|^2\varphi^2\,d\gamma\\
&\le
\int_X \big|\nabla(|\nabla u|\varphi)\big|^2\,d\gamma\\
&=
\int_X \varphi^2\big|\nabla |\nabla u|\big|^2+|\nabla u|^2|\nabla \varphi|^2 
+ 2[\nabla|\nabla u|,\nabla\varphi]\,|\nabla u|\varphi
\,d\gamma\\ &=
\int_X \varphi^2\big|\nabla |\nabla u|\big|^2+|\nabla u|^2|\nabla \varphi|^2 
+ \frac12[\nabla|\nabla u|^2,\nabla\varphi^2]\,d\gamma.
\end{split}
\end{equation}
We now exploit \eqref{linear}
with test function $u_i\varphi^2$ and we get
\begin{equation*}
\begin{split}
&\phantom{=}
\int_X \big(f'(u)-1\big)
u_i^2\varphi^2\,d\gamma\\
&=
\int_X [ \nabla u_i,\nabla (u_i\varphi^2)]\,d\gamma
\\
&=
\int_X |\nabla u_i|^2\varphi^2+u_i
[\nabla u_i,\nabla \varphi^2]\,d\gamma
\\ &=
\int_X |\nabla u_i|^2\varphi^2+\frac12
[ \nabla u_i^2,\nabla \varphi^2]\,d\gamma.
\end{split}
\end{equation*}
Summing over $i\in\N$, we conclude that
\begin{equation}\label{sz02}
\begin{split}
&\phantom{=}
\int_X \big(f'(u)-1\big)
|\nabla u|^2\varphi^2\,d\gamma\\
&=\int_X |\nabla^2 u|^2\varphi^2+\frac12
[ \nabla |\nabla u|^2,\nabla \varphi^2]\,d\gamma.
\end{split}
\end{equation}
{F}rom \eqref{sz01} and \eqref{sz02}, we conclude that
\begin{equation*}
\begin{split}
&\int_X |\nabla^2 u|^2\varphi^2+\frac12
[ \nabla |\nabla u|^2,\nabla \varphi^2]\,d\gamma
\\ &\qquad\le
\int_X \varphi^2\big|\nabla |\nabla u|\big|^2+|\nabla u|^2|\nabla \varphi|^2 
+ \frac12 [\nabla|\nabla u|^2,\nabla\varphi^2]\,d\gamma
\end{split}\end{equation*}
which gives \eqref{8.1}.
\end{proof}

Let $u\in C^1(X)\cap L^\infty(X)$ satisfying \eqref{regdue}, let $N\in\mathbb N$ and $\overline x_N\in X_N^\perp$.
We consider the map $\psi_{N,\overline x_N}:\R^N\to \R$ defined as 
$\psi_{N,\overline x_N}(\underline x_N):=u(\underline x_N,\overline x_N)$, and let
\begin{eqnarray*}
{\mathcal{N}}_N(\overline x_N)&:=&\big\{ \underline x_N\in\R^N {\mbox{ : }}\nabla \psi_{N,\overline x_N}(\underline x_N)\ne 0\big\}
\\
&=&
\big\{ \underline x_N\in\R^N {\mbox{ : }}
\exists\, i\in\{1,\dots,N\} {\mbox{ such that }} u_i(\underline x_N,\overline x_N)\ne 0\big\}
\end{eqnarray*}
be its noncritical set.
By the Implicit Function Theorem,
the level set of $\psi_{N,\overline x_N}$ in ${\mathcal{N}}_N(\overline x_N)$ 
are $(N-1)$-dimensional hypersurfaces of class $C^2$.
Thus we can consider the principal curvatures
of these hypersurfaces, that we denote by
$\kappa_{1,N},\dots,\kappa_{N-1,N}$,
and the tangential gradient of $\psi_{N,\overline x_N}$\footnote{the tangential gradient of a function $g$ along a hypersurface with normal $\nu$ is
$\nabla g - (\nabla g\cdot \nu) \nu$,
that is, the tangential component of the full gradient},
that we denote by $\nabla_{T,N}$.
We also set
$$ 
\nabla_N\,u:=\Pi_N \nabla u =\nabla \psi_{N,\overline x_N} \qquad 
\nabla^2_N u:= \nabla_N \big(\nabla_N u\big)=\nabla^2 \psi_{N,\overline x_N} \qquad 
{\mathcal{K}}_{N}:=\sqrt{\sum_{i=1}^{N-1}\kappa_{i,N}^2}
$$
and
$$
{\mathcal{N}}_N := \Big\{ x=(\underline x_N,\overline x_N)\in X:\,\underline x_N\in {\mathcal{N}}_N(\overline x_N)\Big\}
=\Big\{ x\in X:\,\nabla_N u(x)\ne 0\Big\}\,.
$$
%Moreover, for short, we set
%$$ \int_{{\mathcal{N}}_N(\overline x_N)}
%g\,d\gamma:=
%\int_{{\mathcal{N}}_N(\overline x_N)}
%g\,d\gamma_N\,.
%\int\limits_{\etop{x=(\underline x_N,
%\overline x_N)\in X}{\underline x_N
%\in {\mathcal{N}}_N(\overline x_N)}} g\,d\gamma.$$
With this notation, we have the following

\begin{lemma}
Let $u\in C^1(X)\cap L^\infty(X)$
satisfy \eqref{PDE}, \eqref{regdue} and \eqref{stable}, and fix $N\in\N$. For any 
$\varphi\in W^{1,\infty}(X)$ we have
\begin{equation}\label{8.0}
\begin{split}
&\int_{
{\mathcal{N}}_N}
\Big( |\nabla_N u|^2
{\mathcal{K}}_{N}^2+
\big| \nabla_{T,N}|\nabla_N u|\big|^2
\Big) \varphi^2\,d\gamma
\\
&\qquad\le\int_X \Big(
|\nabla^2u|^2-\big|\nabla |\nabla u|\big|^2\Big) \varphi^2\,d\gamma
\\ &\qquad
\le\int_X
|\nabla u|^2|\nabla \varphi|^2\,d\gamma.
\end{split}\end{equation}
\end{lemma}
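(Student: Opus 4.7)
The right-hand inequality in the statement is already established as~\eqref{8.1}, so the plan is to prove the left-hand one. My approach is to derive it from a pointwise inequality on the noncritical set $\mathcal{N}_N$, promoted to the integral statement by a Kato-type nonnegativity on its complement.

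The first ingredient is the classical finite-dimensional Sternberg--Zumbrun identity, as in~\cite{SZ1, SZ2}, applied to the $C^2$ section $\psi_{N,\overline x_N}$ at each fixed $\overline x_N\in X_N^\perp$ (note that $C^2$ regularity in the relevant variables is guaranteed by \eqref{regdue}). On $\mathcal{N}_N(\overline x_N)$ it gives, pointwise,
\[
|\nabla_N u|^2\mathcal{K}_N^2 + |\nabla_{T,N}|\nabla_N u||^2 \;=\; |\nabla_N^2 u|^2 - |\nabla_N|\nabla_N u||^2.
\]
So the task reduces to the pointwise comparison on $\mathcal{N}_N$
\[
|\nabla_N^2 u|^2 - |\nabla_N|\nabla_N u||^2 \;\le\; |\nabla^2 u|^2 - |\nabla|\nabla u||^2,
\]
which rearranges to $|\nabla|\nabla u||^2 - |\nabla_N|\nabla_N u||^2 \le |\nabla^2 u|^2 - |\nabla_N^2 u|^2$.

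To establish this comparison, I would split the expansion $\partial_k|\nabla u|=\sum_j u_ju_{jk}/|\nabla u|$ and the denominator $|\nabla u|^2 = |\nabla_N u|^2+\sum_{j>N}u_j^2$ according to whether $j\le N$ or $j>N$. For outer index $k>N$, a single Cauchy--Schwarz yields $(\partial_k|\nabla u|)^2\le\sum_j u_{jk}^2$, which already accounts for all terms in $|\nabla^2u|^2-|\nabla_N^2 u|^2$ indexed by $k>N$. For $k\le N$ the analogous bound must be sharper, since one has to subtract $(\partial_k|\nabla_N u|)^2$ from the left: the delicate step is a cross term of indefinite sign in the numerator after bringing to a common denominator, which I would dominate by a weighted AM--GM whose weight is tuned to the ratio between $|\nabla_N u|^2$ and the remainder $|\nabla u|^2-|\nabla_N u|^2$. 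After this, a final Cauchy--Schwarz on the partial sum $\sum_{j>N}u_j u_{jk}$ collapses everything to the per-index bound $(\partial_k|\nabla u|)^2-(\partial_k|\nabla_N u|)^2\le\sum_{j>N}u_{jk}^2$; summing over $k\le N$ completes the pointwise comparison.

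Finally, to deduce the integral inequality I would observe that the middle integrand $|\nabla^2u|^2-|\nabla|\nabla u||^2$ is nonnegative on all of $X$ by the Kato inequality $|\nabla|\nabla u||^2\le|\nabla^2 u|^2$ (itself a single Cauchy--Schwarz on columns of the Hessian), so the middle integral dominates its restriction to $\mathcal{N}_N$, where the pointwise comparison applies. I expect the sharp pointwise estimate for $k\le N$ to be the main obstacle; the correct choice of weight in the AM--GM step (matching the splitting of $|\nabla u|^2$) is what reconciles all the Cauchy--Schwarz cross terms.
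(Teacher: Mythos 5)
Your proposal is correct and shares the paper's overall architecture: reduce the left-hand inequality to a pointwise comparison between the truncated Sternberg--Zumbrun deficit $\mathcal{D}_N := |\nabla_N^2 u|^2 - |\nabla_N|\nabla_N u||^2$ and the full deficit $|\nabla^2 u|^2 - |\nabla|\nabla u||^2$, identify $\mathcal{D}_N$ on $\mathcal{N}_N$ with $|\nabla_N u|^2\mathcal{K}_N^2 + |\nabla_{T,N}|\nabla_N u||^2$ via the finite-dimensional identity of \cite{SZ2}, and integrate against $\varphi^2$. Where you diverge from the paper is in the two technical ingredients. For the pointwise comparison, the paper telescopes, showing that $\mathcal{D}_N$ is nondecreasing in $N$ by estimating $\mathcal{D}_N-\mathcal{D}_{N-1}$ from below and then passing to the limit; you instead compare $\mathcal{D}_N$ directly to the full deficit, splitting the outer index by $k\le N$ versus $k>N$, and for $k\le N$ invoking the weighted Cauchy--Schwarz $\tfrac{(P+Q)^2}{A^2+B^2}\le \tfrac{P^2}{A^2}+\tfrac{Q^2}{B^2}$ with $A^2=|\nabla_N u|^2$, $B^2=|\nabla u|^2-|\nabla_N u|^2$, followed by $Q^2/B^2\le\sum_{j>N}u_{jk}^2$; that weight is exactly the right one and the estimate closes as you indicate. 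For the complement of $\mathcal{N}_N$, the paper uses Stampacchia's theorem to conclude $\mathcal{D}_N=0$ a.e.\ there (so that $\int_X\mathcal{D}_N\varphi^2=\int_{\mathcal{N}_N}\mathcal{D}_N\varphi^2$), whereas you simply note that the full deficit is a.e.\ nonnegative by Kato's inequality and drop the complement. Both routes are valid; yours has the minor advantage of avoiding the intermediate sets where $|\nabla_{N-1}u|$ could vanish inside $\mathcal{N}_N$ during the telescoping, and of using only a single Cauchy--Schwarz family instead of a measure-theoretic regularity fact, at the cost of making the nonnegativity of the integrand an explicit input rather than an output of the monotonicity.
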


\begin{proof} 
Let 
\begin{eqnarray*}
{\mathcal{D}}_N&:=&|\nabla^2_Nu|^2-\big|\nabla_N |\nabla_N u|\big|^2\\&=&
\sum_{1\le i,j\le N} u_{ij}^2-\sum_{1\le i\le N}\left[
\frac{\nabla_N u}{|\nabla_N u|}, \nabla_N u_i\right]^2\\
&=&
\sum_{1\le i,j\le N} \left( u_{ij}^2-
\left(\frac{u_j u_{ij}}{|\nabla_N u|} \right)^2\right).
\end{eqnarray*}
Since $|\nabla_{N-1} u|\le |\nabla_N u|$ and
$$ \left| \frac{u_j u_{ij}}{|\nabla_N u|}\right|
\le \frac{|u_j|}{|\nabla_N u|}\,| u_{ij}|\le |u_{ij}|$$
for any $i$, $j\le N$, it follows that
\begin{eqnarray*}
&& {\mathcal{D}}_N-{\mathcal{D}}_{N-1}
\,\ge \,\sum_{
\begin{array}{cc}
\scriptstyle{N-1\le i,j\le N}
\\
\scriptstyle{\max(i,j)=N}
\end{array}
} \left( u_{ij}^2-
\left(\frac{u_j u_{ij}}{|\nabla_N u|} \right)^2\right)
\,\ge\,0
\end{eqnarray*}
so that ${\mathcal{D}}_N$ is nondecreasing in $N$. Accordingly,
\begin{equation}\label{sz.1}
|\nabla^2u|^2-\big|\nabla |\nabla u|\big|^2
= \lim_{M\rightarrow+\infty}
{\mathcal{D}}_M\ge {\mathcal{D}}_N\end{equation}
for any $N\in\N$. Moreover, 
by Stampacchia's Theorem
we have that
$\nabla_N |\nabla_N u|=0$ for almost any 
$\underline x_N\in\R^N\setminus {\mathcal{N}}_N(\overline x_N)$, and
similarly $u_{ij}=0$
for almost any $\underline x_N\in\R^N\setminus {\mathcal{N}}_N(\overline x_N)$.
%for which $u_i(\underline x_N,\overline x_N)=0$. 
Therefore
\begin{equation}\label{sz.2}
{\mathcal{D}}_N=|\nabla_N^2u|^2-\big|\nabla_N |\nabla_N u|\big|^2
= 0 {\mbox{
for almost any $\underline x_N\in\R^N\setminus
{\mathcal{N}}_N(\overline x_N)$.
}}\end{equation}
On the other hand, by \cite[Formula (2.1)]{SZ2},
$$ {\mathcal{D}}_N=|\nabla_N u|^2
{\mathcal{K}}_{N}^2+
\big| \nabla_{T,N}|\nabla_N u|\big|^2
\quad{\mbox{ when }}\underline x_N\in
{\mathcal{N}}_N(\overline x_N).$$
{F}rom this, \eqref{sz.1} and \eqref{sz.2}, we obtain
\begin{equation*}
\begin{split}
& \int_X \Big(
|\nabla^2u|^2-\big|\nabla |\nabla u|\big|^2\Big) \varphi^2\,d\gamma
\\ &\qquad\ge
\int_{X} {\mathcal{D}}_N \varphi^2\,d\gamma
\\ &\qquad =
\int_{{\mathcal{N}}_N} 
{\mathcal{D}}_N\varphi^2\,d\gamma
\\ &\qquad=
\int_{ {\mathcal{N}}_N} 
\Big(|\nabla_N u|^2
{\mathcal{K}}_{N}^2+
\big| \nabla_{T,N}|\nabla_N u|\big|^2
\Big) \varphi^2\,d\gamma\,,
\end{split}
\end{equation*}
which, recalling \eqref{8.1}, implies \eqref{8.0}.
\end{proof}

\subsection{A symmetry result}

We now use the previous material to obtain a one-dimensional
symmetry result for the $N$-dimensional projection
of the solution. The idea of using geometric Poincar\'e
inequalities as the ones in \cite{SZ1, SZ2} in order
to obtain symmetry properties goes back to \cite{Far}
and it was widely used in \cite{FSV} in the finite dimensional
Euclidean setting.
The result we present here is the following:

\begin{proposition}\label{finite}
Fix $N\in\N$ and $\overline x_N\in X_N^\perp$. Let $u\in C^{1}(X)\cap L^\infty(X)$
satisfy \eqref{PDE}, \eqref{regdue} and \eqref{stable}. Then, the map $\psi_{N,\overline x_N}$
%$$ \R^n\ni\underline x_N\mapsto u(\underline x_N,\overline x_N)$$
is one-dimensional, i.e. there exists
$U_{N,\overline x_N}:\R\rightarrow\R$
and $\omega_{N,\overline x_N}\in \R^N$, with $\vert\omega_{N,\overline x_N}\vert=1$, 
such that
\begin{equation}\label{eqU}
u(\underline x_N,
\overline x_N)=U_{N,\overline x_N}
\big(	\langle \omega_{N,\overline x_N}, \underline x_N\rangle\big)
\end{equation}
for any $\underline x_N\in\R^N$.
\end{proposition}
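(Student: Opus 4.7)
My plan is to apply the geometric Poincaré inequality \eqref{8.0} with a test function depending only on the first $N$ coordinates, force the right-hand side to vanish in the limit, and then deduce one-dimensionality from the pointwise rigidity conditions this produces on the level sets of $\psi_{N,\overline x_N}$.

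First I would pick a standard cutoff $\eta_R\in C^\infty_c(\R^N)$ with $\eta_R\equiv 1$ on $B_R$, $\mathrm{supp}\,\eta_R\subset B_{2R}$, and $|\nabla \eta_R|\le C/R$, and set $\varphi_R(x):=\eta_R(\underline{x}_N)\in W^{1,\infty}(X)$. By the remark at the beginning of Section~3, $u\in W^{2,2}(X)$, so $\int_X|\nabla u|^2\,d\gamma<\infty$, and hence the right-hand side of \eqref{8.0} for $\varphi=\varphi_R$ is dominated by $(C^2/R^2)\int_X|\nabla u|^2\,d\gamma$, which tends to $0$ as $R\to\infty$. Passing to the limit on the left-hand side by monotone convergence yields
$$
\int_{\mathcal{N}_N}\Bigl(|\nabla_N u|^2\,\mathcal{K}_N^2+\bigl|\nabla_{T,N}|\nabla_N u|\bigr|^2\Bigr)\,d\gamma=0,
$$
so both integrands vanish $\gamma$-almost everywhere on $\mathcal{N}_N$.

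Next I would upgrade these identities from $\gamma$-a.e.\ to everywhere and then rigidify the profile. The monotonicity assumption \eqref{assw} (with $w=h_1$ after relabelling the basis) gives $\partial_1 u>0$ on all of $X$, hence $\nabla_N u\ne 0$ everywhere and $\mathcal{N}_N=X$; by \eqref{regdue}, both $\mathcal{K}_N$ and $\nabla_{T,N}|\nabla_N u|$ are continuous on $X$, so they vanish identically. Fixing now $\overline x_N\in X_N^\perp$, the $C^2$ map $\psi:=\psi_{N,\overline x_N}\colon\R^N\to\R$ has nowhere vanishing gradient, vanishing principal curvatures on every level set, and $|\nabla\psi|$ constant on each level set.

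To conclude, I would show that the unit normal $\omega:=\nabla\psi/|\nabla\psi|$ is a constant vector in $\R^N$: the vanishing shape operator forces $D_v\omega=0$ for every $v$ tangent to a level set, while the constancy of $|\nabla\psi|$ along level sets makes $\nabla|\nabla\psi|$ parallel to $\omega$, and a short pointwise computation then gives $D_\omega\omega=0$ as well. Hence $\nabla\psi$ is everywhere parallel to a fixed unit vector $\omega$, so $\psi(\underline x_N)=U(\langle\omega,\underline x_N\rangle)$ for some $U\colon\R\to\R$. The main obstacle is the first step: choosing the cutoff so that the right-hand side of \eqref{8.0} genuinely vanishes in the limit while the left-hand side still captures full $N$-dimensional rigidity; once the two rigidity identities are in hand, the concluding computation on $\omega$ is routine.
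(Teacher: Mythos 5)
Your approach is essentially the same as the paper's: apply the geometric Poincar\'e inequality \eqref{8.0} with a suitable cutoff, send the cutoff to $1$ using the finiteness of $\int_X|\nabla u|^2\,d\gamma$ (which follows from $u\in W^{2,2}(X)$), deduce that $|\nabla_N u|^2\mathcal K_N^2+|\nabla_{T,N}|\nabla_N u||^2$ vanishes on $\mathcal N_N$, and conclude by geometric rigidity. The two differences are minor. First, you use a cylindrical cutoff $\eta_R(\underline x_N)$ with $|\nabla\eta_R|\le C/R$, so the right-hand side of \eqref{8.0} is dominated by $(C/R)^2\int_X|\nabla u|^2\,d\gamma\to 0$; the paper instead uses a radial cutoff $\Phi(|x|)$ with bounded (not small) gradient, concentrated on an annulus $\{R\le|x|\le R+1\}$, and invokes the integrability of $|\nabla u|^2$ to make the annular contribution vanish. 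Both are correct; yours avoids the slight abuse of notation in the paper's choice of $|x|$. Second, you replace the paper's citation of \cite[Lemma 2.11]{FSV} by a direct geometric argument on the unit normal $\omega=\nabla\psi/|\nabla\psi|$; this is legitimate, though calling it ``routine'' undersells it a bit (it is the content of that cited lemma), whereas your stated ``main obstacle'' — the cutoff — is in fact the easy part.

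One genuine gap: Proposition~\ref{finite} assumes only \eqref{PDE}, \eqref{regdue}, \eqref{stable}, \emph{not} the monotonicity \eqref{assw}. You invoke \eqref{assw} (``$\partial_1 u>0$ on all of $X$'') to conclude $\mathcal N_N=X$ and thereby work with a globally defined foliation by regular level sets. Without \eqref{assw}, $\mathcal N_N(\overline x_N)$ may be a proper open subset of $\R^N$, possibly with several connected components, and your last paragraph needs to be run on each component of $\mathcal N_N$ separately and then patched together (this is precisely what \cite[Lemma 2.11]{FSV} does). Since the Proposition is ultimately applied under \eqref{assw} (via Lemma~\ref{monotone}), this does not affect the paper's final result, but as a proof of the Proposition as stated you should either add the component-by-component argument or explicitly add \eqref{assw} to your hypotheses.
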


\begin{proof} We fix $R>1$, to be taken arbitrarily large
in what follows, and let $\Lambda=\max_i \lambda_i$.
Let $\Phi\in C^\infty(\R)$ be such that $\Phi(t)=1$
if $t\le R$, $\Phi(t)=0$ if $t\ge R+1$ and $|\Phi'(t)|\le 3$
for any $t\in[R,R+1]$.
We take $\varphi(x):=\Phi(|x|)$. Then $|\nabla\varphi(x)|\le \sqrt{\Lambda}\,
|\Phi'(|x|)|\le 3\sqrt{\Lambda}$, and \eqref{8.0} yields
\begin{equation}\label{8.9}
\int\limits_{{\mathcal{N}}_N\cap\{|x|\le R\}} 
|\nabla_N u|^2
{\mathcal{K}}_{N}^2+
\big| \nabla_{T,N}|\nabla_N u|\big|^2\,d\gamma\ 
\le\ 9 \Lambda \int\limits_{\{R\le |x|\le R+1\}}
|\nabla u|^2\,d\gamma.
\end{equation}
Also, due to our assumptions on $u$,
$$ \int_{X}
|\nabla u|^2\,d\gamma<+\infty.$$
Therefore, by sending $R\rightarrow+\infty$ in \eqref{8.9},
we conclude that
$$ |\nabla_N u|^2 {\mathcal{K}}_{N}^2
+ \big| \nabla_{T,N}|\nabla_N u|\big|^2
=0$$
for any $x\in {\mathcal{N}}_N$. {F}rom this and \cite[Lemma 2.11]{FSV} we get \eqref{eqU}.
\end{proof}

{F}rom the finite dimensional symmetry result
of Proposition \ref{finite}, one can take the limit as $N\rightarrow+\infty$ and obtain:

\begin{corollary}\label{infinite}
Let $u\in C^{1}(X)\cap L^\infty(X)$
satisfy \eqref{PDE}, \eqref{regdue} and \eqref{stable}. Then, $u$ is necessarily one-dimensional, i.e. there exists
$U:\R\rightarrow\R$ and $\omega\in X^*$ such that
$$ u(x)=U(\langle\omega,x\rangle)$$
for any $x	\in X$.
\end{corollary}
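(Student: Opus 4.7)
The plan is to apply Proposition \ref{finite} to every finite-dimensional slice and then pass to the limit $N\to\infty$ in two substeps: first showing that the slicewise unit vector $\omega_{N,\overline x_N}$ does not actually depend on the transverse coordinate $\overline x_N\in X_N^\perp$, and then that the resulting vectors $\omega^{(N)}\in S^{N-1}\subset H_N$ converge in $H$ to the global direction.

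For the first step, differentiating the representation $u(\underline x_N,\overline x_N)=U_{N,\overline x_N}(\langle\omega_{N,\overline x_N},\underline x_N\rangle)$ in the $H_N$-directions yields
\begin{equation*}
\nabla_N u(x)=U'_{N,\overline x_N}\bigl(\langle\omega_{N,\overline x_N},\underline x_N\rangle\bigr)\,\omega_{N,\overline x_N},
\end{equation*}
so on the non-critical set $\omega_{N,\overline x_N}=\pm\nabla_N u(x)/|\nabla_N u(x)|$. Since $\nabla u\colon X\to H$ is continuous (by $u\in C^1(X)$) and $\nabla_N u=\Pi_N\nabla u$, the map $\overline x_N\mapsto\omega_{N,\overline x_N}$ is continuous once a global sign is fixed via some reference point $x_0$ with $\nabla u(x_0)\ne 0$. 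I then compare the $N$- and $(N{+}1)$-dimensional slices: writing $\omega_{N+1,\overline x_{N+1}}=(\omega',c)\in\R^N\oplus\R$ and $\overline x_N=\overline x_{N+1}+s\,h_{N+1}$, the identity
\begin{equation*}
U_{N,\overline x_N}\bigl(\langle\omega_{N,\overline x_N},\underline x_N\rangle\bigr)=U_{N+1,\overline x_{N+1}}\bigl(\langle\omega',\underline x_N\rangle+cs\bigr)
\end{equation*}
forces $\omega_{N,\overline x_N}\parallel\omega'$ on non-constant slices; in particular, $\omega_{N,\overline x_N}$ does not depend on $s$. Iterating the argument with pairs $(N{+}k,N{+}k{+}1)$ for $k\ge 0$ shows that $\omega_{N,\overline x_N}$ is independent of every $h_j$-component of $\overline x_N$ with $j>N$. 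Since $\mathrm{span}\{h_j:j>N\}$ is dense in $X_N^\perp$ and $\omega_{N,\overline x_N}$ depends continuously on $\overline x_N$, it is globally constant in $\overline x_N$, with value some $\omega^{(N)}\in S^{N-1}$.

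For the second step, take $x_0\in X$ with $\nabla u(x_0)\ne 0$ (if no such point exists, $u$ is constant and the theorem is trivial). Then $\omega^{(N)}=\Pi_N\nabla u(x_0)/|\Pi_N\nabla u(x_0)|$ for all sufficiently large $N$, and since $\{h_j\}$ is an orthonormal basis of $H$, $\Pi_N\nabla u(x_0)\to\nabla u(x_0)$ in $H$, so
\begin{equation*}
\omega^{(N)}\longrightarrow \omega:=\nabla u(x_0)/|\nabla u(x_0)|\quad\text{in }H.
\end{equation*}
Writing $\omega^{(N+1)}=(a_N\omega^{(N)},c_{N+1})$ with $a_N^2+c_{N+1}^2=1$, the compatibility $U_{N,\overline x_N}(t)=U_{N+1,\overline x_{N+1}}(a_N t+c_{N+1}s)$ allows one to assemble, after affine reparametrizations, a single real profile $U$ such that $u(x)=U(\hat\omega(x))$ on $X$, where $\hat\omega$ is the Cameron--Martin functional of $\omega$. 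The regularity \eqref{regdue} then lets us identify $\hat\omega$ with a continuous linear functional $\omega\in X^*$, yielding \eqref{uniwien}.

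The main obstacle is the first step, where the slicewise information of Proposition \ref{finite} must be synthesized into a single direction per $N$: the key geometric input is the compatibility between consecutive slices, together with the continuity of $\nabla u$ and the density of $\mathrm{span}\{h_j:j>N\}$ in $X_N^\perp$. A secondary technical point is that the Cameron--Martin limit $\hat\omega$, a priori only in $H$, actually comes from an element of $X^*$.
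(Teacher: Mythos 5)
Your approach diverges from the paper's in a way that adds unnecessary difficulty. The paper does not prove that $\omega_{N,\overline x_N}$ is independent of $\overline x_N$; it uses only the slice at $\overline x_N=0$. Setting $V:=\cup_N H_N$ and $V_N:=\mathcal{N}_N\cap H_N$, the paper observes that for $x\in V\cap\mathcal{N}$ one has $\nabla u(x)/|\nabla u(x)|=\lim_N \nabla_N u(x)/|\nabla_N u(x)|=\lim_N\omega_{N,0}=:h$, and then density of $V$ in $X$ together with continuity of $\nabla u/|\nabla u|$ on the open set $\mathcal{N}$ gives $\nabla u/|\nabla u|=h$ on all of $\mathcal{N}$. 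Your stronger intermediate claim, constancy of $\omega_{N,\overline x_N}$ in $\overline x_N$, is not needed, and its proof has holes: under \eqref{stable} alone (this corollary does not assume \eqref{assw}), an $N$-slice may be critical everywhere, so $\omega_{N,\overline x_N}$ is not uniquely defined there; this undermines both the continuity of $\overline x_N\mapsto\omega_{N,\overline x_N}$ and the consecutive-slice comparison (your qualifier ``on non-constant slices'' leaves the degenerate slices unhandled, yet you then invoke continuity to cover them). Similarly, the ``assembly'' of the one-dimensional profiles into a single $U$ via affine reparametrizations is asserted rather than proved, whereas the paper simply sets $U(t):=u(th)$ once the constant direction $h$ is in hand.

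There is also a genuine error at the end. You invoke \eqref{regdue} to conclude that $\hat\omega$ comes from an element of $X^*$. That is not the relevant ingredient: \eqref{regdue} concerns continuity of the second partials $\partial_i\partial_j u$ and plays no role in this identification. The correct argument, as in the paper, is that $u$ is continuous on $X$ (since $u\in C^1(X)$) and, if $U$ is nonconstant (otherwise the claim is trivial), $U$ is invertible on an interval, so $\hat h=U^{-1}\circ u$ is continuous on $X$; a Cameron--Martin measurable linear functional that is continuous on $X$ necessarily lies in $QX^*$, i.e.\ $\hat h(x)=\langle\omega,x\rangle$ for some $\omega\in X^*$. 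Without this step you only obtain $h\in H$, which does not yet give the representation \eqref{uniwien} with $\omega\in X^*$.
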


\begin{proof} 
We first show that
there exists $h\in H$ such that 
\begin{equation}\label{omega}
\frac{\nabla u}{|\nabla u|}=h \qquad {\rm in\ }\mathcal N:=\big\{ x\in X:\, \nabla u(x)\ne 0\big\}
=\bigcup_{N\in\mathbb N} \mathcal N_N\,.
\end{equation}
Let $V\subset X$ be defined as $V=\cup_N H_N$. Since $V$ is a dense subset of $X$, it is enough to show 
that \eqref{omega} holds in $\mathcal N\cap V=\cup_N V_N$, where $V_N:=\mathcal N_N\cap H_N$.\\
However, from Proposition \ref{finite} we know that 
\begin{equation}\label{omegaN}
\frac{\nabla_N u}{|\nabla_N u|}=\omega_{N,0} \qquad {\rm in\ }V_N,
\end{equation}
which implies that
\begin{equation*}%\label{pippo}
\frac{\nabla u}{|\nabla u|}=\lim_{N\to \infty}\frac{\nabla_N u}{|\nabla_N u|}=
\lim_{N\to \infty}\omega_{N,0}=:h  \qquad {\rm in\ }V.
\end{equation*}
{}From \eqref{omega} it follows that there exists a function 
$U:\R\rightarrow\R$ such that $U(t)=u(th)$ for all $t\in\R$, and
\begin{equation}\label{eqUU}
u(x)=U(\hat h(x))\qquad x\in X.
\end{equation}
Moreover, $U$ is a bounded nondecreasing solution to the ODE
\[
U''-t\, U'+f(U)=0 \qquad t\in\R.
\]
Being $u$ continuous,
if $U$ is nonconstant (otherwise the thesis follows immediately) then
the function $\hat h$ is also continuous, so that
$h\in QX^*$ and $\hat h(x)=\langle \omega,x\rangle$
for some $\omega\in X^*$, which implies the thesis.
\end{proof}

\subsection{Proof of Theorem \ref{main}}

The proof of Theorem \ref{main} follows directly from
Lemma \ref{monotone} and Corollary \ref{infinite}. \qed

\begin{remark}\rm
%We observe that Theorem \ref{main} still holds if we assume that 
%$u\in C^1(X)\cap L^\infty(X)$ is a weak solution to \eqref{PDE} satisfying \eqref{assw} and such that 
%\begin{equation}\label{eqrem}
%\psi_{N,\overline x_N}\in C^2(\R^N) \qquad {\rm for\ any\ }N\in\mathbb N\ {\rm and\ }\overline x_N\in X_N^\perp.
%\end{equation}
%
%We observe that if  we only require in Theorem \ref{main} the function $\nabla u: X\to H$ to be continuous, without imposing the continuity of $u$,
%we can still proceed as above and obtain that $u$ is one-dimensional, in the sense that there exist $U:\R\rightarrow\R$ and $h\in H$ such that \eqref{eqUU} holds.
We observe that, in the infinite dimensional case, there may exist weak solutions to \eqref{PDE}, satisfying \eqref{assw}, which are not continuous. Indeed, given $U:\R\to \R$ satisfying \eqref{ode} and \eqref{mono} below, the function $u(x)=U(\hat h(x))$ in \eqref{eqUU} is a solution to \eqref{PDE}, monotone in the direction given by $h$, for any $h\in H$. However, such a solution is continuous only if $h\in QX^*$. As a possible generalization of Theorem \ref{main}, 
one could ask if any bounded weak solution to \eqref{PDE}, satisfying $[\nabla u,w]>0$ for some $w\in H$, is of this form.
\end{remark}

\section{Heteroclinic solutions}

The results in Theorems~\ref{mainfinite} and~\ref{main}
may be seen either as classification results (when one knows
explicitly the solutions of the associated
one-dimensional problem) or as nonexistence result
(when the associated one-dimensional problem does not admit any solution). For this,
we now give some simple conditions on the nonlinearity $f$ ensuring existence or nonexistence of bounded solutions to the ODE
\begin{equation}  \label{ode}
U''-t\, U'+f(U)=0 \qquad t\in\R
\end{equation} 
satisfying
\begin{equation}  \label{mono}
U'>0 \qquad t\in\R.
\end{equation} 
Notice that, from \eqref{mono} it follows that there exist $U^\pm\in\R$, with $U^-<U^+$, such that 
\begin{equation}\label{eqlim}
\lim_{t\to\pm\infty} U(t)=U^\pm.
\end{equation}
Moreover, passing to the limit in \eqref{ode} we also get
\begin{equation}\label{eqF}
f(U^-)=f(U^+)=0.
\end{equation}

We start with a nonexistence result.
\begin{proposition}\label{annalisa}
Assume that there exists $U_0\in (U^-,U^+)$ such that 
$$
f\ge 0\ in\ [U^-,U_0]\qquad or\qquad f\le 0\ in\ [U_0,U^+].
$$
Then, there are no solutions to \eqref{ode} satisfying \eqref{mono}.
\end{proposition}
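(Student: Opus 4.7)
\medskip

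The plan is to prove Proposition \ref{annalisa} by a contradiction argument based on rewriting the ODE \eqref{ode} in divergence form and exploiting the Gaussian weight.

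First, I would introduce the integrating factor $e^{-t^2/2}$ and define $v(t):=e^{-t^2/2}U'(t)$. A direct computation shows that the ODE \eqref{ode} is equivalent to
\begin{equation*}
v'(t)=-e^{-t^2/2}f(U(t)),\qquad t\in\R.
\end{equation*}
Next, using \eqref{mono} together with $U^-<U^+$ and $U$ continuous, I would invoke the Intermediate Value Theorem and \eqref{eqlim} to pick $t_0\in\R$ with $U(t_0)=U_0$. Note that $v(t_0)=e^{-t_0^2/2}U'(t_0)>0$ by \eqref{mono}.

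Now I would split into the two cases. If $f\ge 0$ on $[U^-,U_0]$, then for every $t\le t_0$ we have $U(t)\in[U^-,U_0]$ by monotonicity, hence $v'(t)\le 0$ on $(-\infty,t_0]$, so $v$ is nonincreasing there and $v(t)\ge v(t_0)>0$ for all $t\le t_0$. If instead $f\le 0$ on $[U_0,U^+]$, then for every $t\ge t_0$ we have $U(t)\in[U_0,U^+]$, hence $v'(t)\ge 0$ on $[t_0,\infty)$, giving $v(t)\ge v(t_0)>0$ for all $t\ge t_0$. In either situation I obtain the lower bound
\begin{equation*}
U'(t)\ge v(t_0)\,e^{t^2/2}
\end{equation*}
on a semi-infinite interval (either $(-\infty,t_0]$ or $[t_0,\infty)$).

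Finally, integrating this lower bound yields a contradiction with the boundedness of $U$: in the first case
\begin{equation*}
U(t_0)-U(t)=\int_t^{t_0} U'(s)\,ds\ge v(t_0)\int_t^{t_0} e^{s^2/2}\,ds\longrightarrow+\infty\quad\text{as }t\to-\infty,
\end{equation*}
while in the second case the analogous integral from $t_0$ to $t$ diverges as $t\to+\infty$. Either way, $U$ cannot be bounded, contradicting \eqref{eqlim}. There is no real obstacle in this argument; the only point one should be slightly careful about is ensuring $t_0$ is well defined, which is immediate from the strict monotonicity in \eqref{mono} and the limits $U^-<U_0<U^+$.
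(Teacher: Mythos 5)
Your proof is correct and follows essentially the same route as the paper's: both pick $t_0$ with $U(t_0)$ in the relevant interval, observe that the sign hypothesis on $f$ turns \eqref{ode} into the differential inequality $U''\ge tU'$ (equivalently, $e^{-t^2/2}U'$ is monotone), and integrate to deduce that $U'$ stays bounded below by a positive constant on a half-line, contradicting \eqref{eqlim}. The only cosmetic differences are that you make the integrating factor $e^{-t^2/2}$ explicit and spell out both cases, while the paper integrates $U''/U'\ge t$ directly and treats one case, declaring the other analogous.
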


\begin{proof}
Let us assume that $f\le 0$ in $[U_0,U^+]$, since the argument is analogous in the other case, and 
assume by contradiction that we are given a solution $U$ of \eqref{ode}, \eqref{mono}.

Letting $t_0>0$ be such that $u(t_0)\in [U_0,U^+]$, we have that $U$ satisfies the differential inequality 
\begin{equation*} 
U''\ge t\, U' \qquad \text{for all }t\in [t_0,+\infty),
\end{equation*}
which implies, by direct integration,
$$
U'(t)\ge U'(t_0)e^{(t^2-t_0^2)/2}\ge U'(t_0)>0 \qquad \text{for all }t\in [t_0,+\infty),
$$
contradicting \eqref{eqlim}.
\end{proof}

We consider the potential $F:\R\to \R$, defined as 
$$
F(t)\,= \,-\int_0^t f(s)\,ds+k\,.
$$ 
 where $k\in \R$. 
Notice that, if $F$ is convex or concave, from \eqref{eqF} if follows 
that $f\equiv 0$ in $[U^-,U^+]$, so that
by Proposition \ref{annalisa} there are no solutions to \eqref{ode} satisfying \eqref{mono}.

%Given a solution $U$ to \eqref{ode} and $t\in\R$, we let
%\[
%E(t)=\frac{U'(t)^2}{2} - F(U(t)). 
%\]
%Notice that, of we integrate \eqref{ode} on $[t_1,t_2]$, with $t_1<t_2$, we get
%\begin{equation}\label{enest}
%E(t_2) = E(t_1) + \int_{t_1}^{t_2} t\,U'(t)^2\,dt\,. 
%\end{equation}
Given $U :(0, +\infty)\to \R$, we let
\begin{equation}\label{g}   
G(U) := \int_0^{+\infty} \left(\frac{(U'(t))^2}{2} + F(U(t))\right) d\gamma(t)
\end{equation}    
where $d\gamma(t)=e^{-t^2/2} dt$. Notice that \eqref{ode} is the 
Euler-Lagrange equation of $G$. 

As a counterpart of the nonexistence result in
Proposition \ref{annalisa},
we now give an existence result for monotone solutions to \eqref{ode}.

\begin{proposition}
Assume that $F$ satisfies the following properties:
\begin{equation}\label{pot}  
\begin{array}{ll}   
F(c)=F(-c)=0 & \text{for some}\ c>0
\\
F(r)>0 & \text{for any }r\not\in \{c,-c\}
\\ 
F(r)=F(-r) & \text{for any }r\in [0,+\infty).
\\ 
f(r) = 0 & \text{iff}\ r\in\{c,-c, 0\}.
\end{array}\end{equation}
Assume also that there exists $U\in W^{1,2}_{\gamma}((0,+\infty))$ such that $U(0)=0$ and 
\begin{equation}\label{assen} 
G(U)<G(0)=\sqrt\frac{\pi}{2}\,F(0).
\end{equation}
Then, there exists a monotone solution to \eqref{ode}, connecting $-c$ to $c$.
\end{proposition}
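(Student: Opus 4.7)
The strategy is to exploit the symmetry of the potential to reduce the problem to a one-sided one, and then to apply the direct method. Indeed, \eqref{pot} forces $F$ to be even and hence $f$ to be odd with $f(0)=0$, so it suffices to construct $\bar U\in C^2([0,\infty))$ with $\bar U(0)=0$, $\bar U'>0$ and $\bar U(t)\to c$ as $t\to\infty$, solving \eqref{ode} on $(0,\infty)$. The odd extension $U(t):=\mathrm{sgn}(t)\,\bar U(|t|)$ will then be $C^2$ across $0$, since the ODE itself forces $\bar U''(0)=-f(0)=0$, and it will provide the desired heteroclinic on $\R$.

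I would construct $\bar U$ by minimizing $G$ over the class
\[
\mathcal{K}:=\{U\in W^{1,2}_\gamma((0,\infty)):\ U(0)=0,\ 0\le U\le c\}.
\]
Starting from the competitor provided by \eqref{assen}, replacing it first by $|U|$ (which leaves $G$ invariant, by the evenness of $F$) and then by $|U|\wedge c$ (which cannot increase $G$, since $F\ge 0$ and $F(c)=0$) shows $\inf_\mathcal{K} G<G(0)$. A minimizing sequence is bounded by $c$ in $L^\infty$ and in $W^{1,2}_\gamma$; combining Rellich on bounded subintervals with smallness of the Gaussian tail for uniformly bounded functions gives strong $L^2_\gamma$ convergence along a subsequence, while the Dirichlet part of $G$ is weakly lower semicontinuous, $F$ is continuous on $[0,c]$, and the trace at $0$ passes to the limit. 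This yields a non-trivial minimizer $\bar U\in\mathcal{K}$. The same truncation identifies $\inf_\mathcal{K} G$ with the unconstrained infimum on $\{U\in W^{1,2}_\gamma:\,U(0)=0\}$, so $\bar U$ is a critical point for unrestricted variations and satisfies \eqref{ode} weakly; classical ODE regularity gives $\bar U\in C^2([0,\infty))$.

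Monotonicity and asymptotics then come for free from the ODE. By \eqref{pot}, the continuous function $f$ has no zero on $(0,c)$ and satisfies $\int_0^c f=F(0)>0$, hence $f>0$ on $(0,c)$. Rewriting \eqref{ode} as
\[
\bigl(e^{-t^2/2}\bar U'(t)\bigr)'=-e^{-t^2/2}f(\bar U(t))
\]
and noting that $e^{-t^2/2}\bar U'(t)\to 0$ as $t\to\infty$ (otherwise $\bar U$ would grow like $e^{t^2/2}/t$, contradicting $\bar U\le c$), integration from $t$ to $\infty$ yields
\[
\bar U'(t)=e^{t^2/2}\int_t^\infty e^{-s^2/2}f(\bar U(s))\,ds\ \ge\ 0.
\]
If $\bar U'(t_0)=0$ at some $t_0$, then $f(\bar U)\equiv 0$ on $[t_0,\infty)$, so $\bar U\equiv 0$ or $\bar U\equiv c$ there; ODE uniqueness at $t_0$ propagates that value to all of $[0,\infty)$, contradicting either $\bar U\not\equiv 0$ or $\bar U(0)=0$. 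Thus $\bar U'>0$ on $[0,\infty)$, and the monotone limit $\bar U(+\infty)$, being a zero of $f$ in $\{0,c\}$, must equal $c$.

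The main technical hurdle, I expect, is to tie cleanly the variational side (which works most naturally in $\mathcal{K}$, where the $L^\infty$ bound is built in) to the ODE side (which needs the unconstrained Euler--Lagrange equation on $(0,\infty)$). The truncation argument identifying the constrained and unconstrained infima, together with ODE uniqueness applied at a first hypothetical contact with $\{0,c\}$, is what resolves both this matching issue and the strict monotonicity claim.
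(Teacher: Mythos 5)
Your proposal is correct, and it diverges from the paper's proof at one genuinely interesting point: the paper uses the \emph{Ehrhard rearrangement} to replace the truncated minimizer $\oU\in[0,c]$ by a nondecreasing competitor $\oU^\star$ with $G(\oU^\star)\le G(\oU)$, and only afterwards invokes the ODE (via a maximum-principle observation) to upgrade ``nondecreasing'' to ``strictly increasing.'' You skip Gaussian symmetrization entirely and extract monotonicity from the ODE itself: writing the equation as $\bigl(e^{-t^2/2}\bar U'\bigr)'=-e^{-t^2/2}f(\bar U)\le 0$ once $\bar U\in[0,c]$, you see that $e^{-t^2/2}\bar U'$ is nonincreasing, its limit at $+\infty$ must be $0$ by the $L^\infty$ bound, hence $\bar U'\ge 0$; strictness and the limit $\bar U(+\infty)=c$ then follow from backward Cauchy--Lipschitz uniqueness (valid since $f$ is locally Lipschitz) and $f>0$ on $(0,c)$. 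Both routes rest on the same skeleton --- direct method on the truncated class, truncation identifying the constrained and unconstrained infima so the minimizer is a genuine critical point, exclusion of the constants $0$ and $c$ by the energy hypothesis \eqref{assen} and the boundary condition, and the odd extension using $f(0)=0$. The Ehrhard route is shorter once the Gaussian rearrangement inequality is taken off the shelf, and is the ``structurally Gaussian'' argument; your route is more elementary and self-contained, at the cost of a more detailed ODE bookkeeping (the decay of $e^{-t^2/2}\bar U'$ at infinity, the propagation of $\bar U\equiv 0$ or $\equiv c$ backward from a putative zero of $\bar U'$). Both are complete; the small points you flagged as hurdles (matching constrained/unconstrained minimization, first contact with $\{0,c\}$) are indeed the ones that need care, and you handle them correctly.
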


\begin{proof} 
Let $\oU$ be a solution to the minimum problem 
\begin{equation}\label{inf}
\min \big\{G(U):\ U\in W^{1,2}_{\gamma}((0,+\infty)),\  U(0)=0\big\}.
\end{equation}  
Note that \eqref{pot} implies 
$$G\left(\min\left(|\oU|,c\right)\right)\le G\left(\oU\right),$$ 
so that we may assume $\oU(t)\in [0,c]$ for all $t\in (0,+\infty)$.

Let now $\oU^\star$ be the Ehrhard rearrangement of $\oU$ \cite{e}, which is defined in such a way that 
$\oU^\star$ is nondecreasing on $(0,+\infty)$, and 
\[
\gamma\Big( \Big\{ t:\, \oU^\star(t)>r\Big\}\Big) 
= \gamma\Big( \Big\{ t:\, \oU(t)>r\Big\}\Big) 
\qquad {\rm for\ all\ }r\in (0,c).\
\] 
Notice that $\oU^\star(0)=0$ and $\oU(t)\in [0,c]$ for all $t\in (0,+\infty)$.
By \cite{e} (see also \cite[Prop. 3.12]{GN}), we have $\oU^\star\in W^{1,2}_{\gamma}((0,+\infty))$ and
\begin{eqnarray*}
\int_0^{+\infty} \frac{({U^\star}'(t))^2}{2} d\gamma(t) &\le&
\int_0^{+\infty} \frac{(U'(t))^2}{2}  d\gamma(t)
\\
\int_0^{+\infty} F(U^\star(t)) d\gamma(t) &=& \int_0^{+\infty} F(U(t)) d\gamma(t),
\end{eqnarray*}
so that
\[
G\big(\oU^\star\big)\le G\big(\oU\big).
\]
In particular, we may assume that $\oU=\oU^\star$, i.e. that $\oU$ is nondecreasing on $(0,+\infty)$.

As $U= c$ and $U= 0$ are solutions to \eqref{ode}, which is the Euler-Lagrange equation of $G$, 
we get that either $\oU=0$ or $\oU=c$ or 
\begin{equation}\label{third}
\oU(t)\in (0,c) \qquad {\rm for\ all\ }t\in (0,+\infty).
\end{equation}
On the other hand, thanks to \eqref{assen} and the fact that $\oU(0)=0$,
we can exclude the first two possibilities, so that \eqref{third} holds.
Moreover, since $\oU$ is nondecreasing and $f(r)\neq 0$ for all $r\in (0,c)$, it follows that
$\oU'(t)>0$ for all $t\in (0,+\infty)$ and 
$$
\lim_{t\to+\infty} \oU(t)=c.
$$
Since by \eqref{pot} the function $t\to -\oU(-t)$ is a monotone solution to \eqref{ode} on $(-\infty, 0)$,
we get that the odd extension of $\oU$ on $\R$ is a solution to \eqref{ode} on the whole of $\R$ which 
satisfies \eqref{mono} and connects $-c$ to $c$.
\end{proof}

\begin{remark}\rm
Notice that for all $U\in W^{1,2}_{\gamma}((0,+\infty))$ we have
\[
G(U)< \widetilde G(U):=\int_0^{+\infty} \left(\frac{(U'(t))^2}{2} + F(U(t))\right) dt.
\]
If we let $\oU$ be the unique solution to 
\begin{eqnarray*}
U''(t)+f(U(t)) &=& 0 \qquad t\in (0,+\infty)
\\
U(0) &=& 0
\\
\lim_{t\to +\infty}U(t) &=& c,
\end{eqnarray*}
we have
\[
G(\oU)< \widetilde G(\oU) = \int_0^c \sqrt{2F(r)}\,dr\,.
\]
In particular, condition \eqref{assen} is verified whenever 
\[
\int_0^c \sqrt{2F(r)}\,dr \le \sqrt{\frac{\pi}{2}}\, F(0)
\]
which is satisfied, for instance, by the standard double-well potential $F(t)=(1-t^2)^2/4$.
\end{remark}

%%%%%%%%%%%%%%%%%%%%%%%%%%%%%%%%%%%%%%%%%%%%%%%%%%%%%%%%%%%%%%%%%%%%%%%%%%%%%%%%%%%% 

\end{document}